\def\bA{\boldsymbol{A}}
\def\baj{\boldsymbol{a}_j}
\def\bb{\boldsymbol{b}}
\def\by{\boldsymbol{y}}
\def\bup{\widetilde{\boldsymbol{x}}}
\def\bD{\boldsymbol{D}}
\def\be{\boldsymbol{e}}
\def\bf{\boldsymbol{f}}
\def\zero{\boldsymbol{0}}
\def\ex{^\star}
\def\bG{\boldsymbol{G}}
\def\bI{\boldsymbol{I}}
\def\bL{\boldsymbol{L}}
\def\bJ{\boldsymbol{J}}
\def\bM{\boldsymbol{M}}
\def\bV{\boldsymbol{V}}
\def\prederr{\boldsymbol{p}}
\def\tr{\mathrm{tr}}
\def\bV{\boldsymbol{V}}
\def\bv{\boldsymbol{v}}
\def\bx{\boldsymbol{x}}
\def\bxu{\widetilde{\boldsymbol{x}}}
\def\bbx{\widehat{\boldsymbol{x}}}
\def\sd{\boldsymbol{s}}
\def\sdt{\widetilde{\boldsymbol{s}}}
\def\Kdown{\mathcal{K}^\downarrow}
\def\Kup{\mathcal{K}^\uparrow}
\def\TA{Twin Algorithm}
\def\MSA{Mutual-Step Algorithm}
\def\KO{Kaczmarz+Oracle}
\def\TM{Twin Method}
\def\MSM{Mutual-Step Method}
\def\std{\sigma}
\def\code{\textsf} %I defined this command so that we can easily switch between typewriter style or sans serif
\newtheorem{keypoint}{Key Point}
\algnewcommand\algorithmicoutput{\textbf{Output:}}
\algnewcommand\Output{\item[\algorithmicoutput]}
\algnewcommand\algorithmicoptional{\textbf{Optional:}}
\algnewcommand\Optional{\item[\algorithmicoptional]}
\algnewcommand{\IIf}[1]{\State\algorithmicif\ #1\ \algorithmicthen}
\algnewcommand{\EndIIf}{\unskip\ \algorithmicend\ \algorithmicif}
\newcommand{\ph}{\phantom}
\newcommand{\smtxa}[2]{
	{\mbox{\scriptsize
			$\left[\!\!
			\begin{array}{#1}
			#2
			\end{array} \!\! \right]$}}}
\title{{A twin error gauge for Kaczmarz's iterations}\thanks{Version \today.
		%\funding{This work was funded by the EuroTech Postdoc project CHARM.}
}}
\author{B.~S.~van Lith\thanks{Department of Applied Mathematics and Computer Science, Technical University of Denmark,
		DK-2800 Kgs.~Lyngby, Denmark (\email{bavli@dtu.dk, pcha@dtu.dk}).}
	\and P.~C.~Hansen\footnotemark[2]
	\and M.~E.~Hochstenbach\thanks{Department of Mathematics and Computer Science, TU Eindhoven, \url{http://www.win.tue.nl/\~hochsten/}.}}
\begin{document}
	\maketitle
	
	% \bvlq{Bart's stuff is red.}
	% \pchq{Per Christian's stuff is blue.}
	% \mhq{And Michiel's stuff is golden.}
	
	\begin{abstract}
		We propose two new algebraic reconstruction techniques based on Kaczmarz's method that produce a regularized solution to noisy tomography problems. Tomography problems exhibit semi-convergence when iterative methods are employed, and the aim is therefore to stop near the semi-convergence point. Our approach is based on an error gauge that is constructed by pairing standard down-sweep Kaczmarz's method with its up-sweep version; we stop the iterations when this error gauge is minimal.
		The reconstructions of the new methods differ from standard Kaczmarz iterates in that our final result is the average of the stopped up- and down-sweeps.
		Even when Kaczmarz's method is supplied with an oracle that provides the exact error--and is therefore able to stop at the best possible iterate --
		our methods have a lower two-norm error in the vast majority of our test cases.
		In terms of computational cost, our methods are a little cheaper than standard Kaczmarz equipped with a statistical stopping rule.
	\end{abstract}
	
	\begin{keyword}Computed tomography, ART, Kaczmarz, stopping rules, error estimation, semi-convergence.
	\end{keyword}
	
	\begin{AMS}
		65F22, 65F10, 65R32, 65F15
	\end{AMS}

	\section{Introduction}
	\label{sec:intro}
	The image reconstruction problem in X-ray tomography can be formulated
	as a large, sparse linear system of equations, i.e.,
	\begin{equation}
		\label{eq:Axb}
		\bA \, \bx = \bb, \qquad
		\bA \in \mathbb{R}^{m \times n} , \quad \bb \in \mathbb{R}^m , \quad
		\bx \in \mathbb{R}^n .
	\end{equation}
	Here, the vectors $\bb$ and $\bx$ represent the measured data (the sinogram) and the image to be reconstructed, respectively.
	The system matrix $\bA$ represents a discretization of the forward problem \cite{discretization};
	there are no restrictions on its dimensions $m$ and $n$.
	There is inherently some noise present in the data $\bb$, and one of the
	key challenges in tomographic reconstruction is to compute a good
	reconstruction in the presence of these errors.
	
	Tomographic reconstruction problems are a type of inverse problems where the
	forward operator, in the continuous formulation, is a smoothing operation known as the Radon transform in the case of 2D parallel-beam scanning. The continuous problem is mildly ill-posed \cite{bertero},
	which leads to a poorly conditioned matrix~$\bA$, especially when the system is large.

	The system \eqref{eq:Axb} is usually too large to solve by factorization methods, and iterative linear solvers are used.
	These solvers exhibit \emph{semi-convergence}
	\cite{natterer} in the presence of noise, meaning that initially the
	reconstruction error decreases but eventually it increases.
	The error consists of two parts, the iteration error and the noise error.
	The iteration error decreases steadily, and in the case of error-free data the classical
	asymptotic convergence theory applies. Initially the noise error is small but it steadily increases
	until the iterative method has ``inverted the noisy data'' rather than the clean data;
	see, e.g., \cite{elfving_2012,elfving_2014} for more details.
	
	To obtain meaningful solutions to noisy problems we need
	to stop the iterations at the semi-convergence point where the
	reconstruction error is at a minimum.
	The iteration number therefore acts as a regularization parameter.
	There are various methods that estimate a good optimal regularization parameter
	and these can be used as stopping rules for the iterations
	\cite{bardsley,Reichel2013}.

	Many of the parameter-choice/stopping rules are based on statistical
	properties of the noise, for instance when using generalized cross-validation
	\cite{wahba} and unbiased predictive risk estimation \cite[Sec.~7.1]{vogel}
	and when using variations of the discrepancy principle
	\cite[Sec.~7.2]{Hansen1998}. 
	
	All these stopping rules may work well for simultaneous iterative reconstruction techniques
	such as Landweber or Cimmino~\cite{Hansen2018}.
	The reason is that these methods tend to produce error histories
	that are very flat around the minimum.
	A few hundred iterations more or less often does not really matter,
	and the quality of the reconstruction is only little affected.
	This is not so for Kaczmarz's method which tends to converge much faster
	\cite{elfving_2014} and thus has a fairly narrow window of opportunity
	around the minimum of the error history; eager readers see Figure~\ref{fig:effect_of_noise} in Section~\ref{sec:numerical_experiments}.
	
	We propose a completely different approach, one without any statistical assumptions on the noise,
	which is based on error estimation. Using several numerical examples, we show that our stopping rules and algorithms perform very well.
	To illustrate the point, we compare our methods with an oracle that provides
	the true error and is therefore able to stop at the best possible iterate.
	
	In our analysis of the proposed methods, we show that our approach is theoretically sound for consistent systems. In the numerical examples, however, we only consider the noisy case.
	
	The rest of the paper is organized as follows.
	Section~\ref{sec:kaczmarz} introduces the necessary background theory
	for Kaczmarz's method.
	In Section~\ref{sec:error_gauge_TA} we analyze the errors and use this analysis
	to propose a new way to estimate the error.
	The insight then leads to a new algorithm that is presented in Section~\ref{sec:MSA}.
	Finally, in Section~\ref{sec:numerical_experiments} we present numerical examples
	that illustrate our theory and compare our new algorithm with existing ones.
	Throughout this work, we will use the following common notations:
	\begin{itemize}
		\item A column vector is denoted by a bold lower-case character, while a bold upper-case character is a matrix. Normal font is used for scalars.
		
		\item We will  use the two-norm and denote it by $\| \cdot \|$.
		
		\item The expectation operator is written as $\mathbb{E}$.
		
		\item Exact quantities are marked with a superscript $\star$, while objects furnished with a tilde ($\sim$) are related to an alternative (up-sweep) version of Kaczmarz's methods, cf.~Section~\ref{sec:analysis}.
		
		\item The symbol $\gets$ in pseudocode means assignment.
	\end{itemize}

	\section{Background theory}
	\label{sec:kaczmarz}
	
	Here we set the stage by summarizing some basic results
	pertaining to Kaczmarz's method, also known as the algebraic reconstruction technique (ART) \cite{gordon_bender,van_dijke}.
	
	\subsection{Kaczmarz and its convergence}
	Let $k=1,2,3,\dots$ denote full sweeps through the rows of $\bA$, and let
	$\bx_0$ denote the starting vector.
	Let $\omega \in (0,2)$ be a relaxation parameter, let
	$\baj^T$ denote the $j$th row of $\bA$, and let $b_j$ denote the
	$j$th element of $\bb$. For every update $i$, a row index $j$ needs to be chosen. There are various strategies for picking the row index $j$, for instance randomized or cyclic \cite{Popa2018};
	here we consider only cyclic down-sweeps, $j=i$, and up-sweeps, $j=m-i+1$.
	Starting from the common choice $\bx_0 = \zero$, in the $k$th sweep we sequentially perform the updates
	\begin{subequations}\label{eq:ART}\begin{align}
			\bx_k^{(0)} &= \bx_{k-1} , \\
			\bx_k^{(i)} &= \bx_k^{(i-1)} + \omega \, \frac{b_j - \baj^T \bx_k^{(i-1)}}{\|\baj\|^2} \, \baj,
			\qquad i=1,2,\dots,m \ , \\
			\bx_{k} &= \bx_k^{(m)} \ .
	\end{align}\end{subequations}
	If there are rows with zero norm, it is most natural to skip them. This is effectively the same as deleting zero rows from $\bA$ and the corresponding entries from $\bb$. When $\omega = 1$, Kaczmarz's method has a nice geometrical interpretation: each update projects onto the hyperplane represented by the $j$th equation.
	
	For the down-sweep version, an entire sweep \eqref{eq:ART} of all equations in Kaczmarz's method can be written as
	\begin{equation}\label{eq:Elfving_theorem}
		\bx_{k+1} = \bx_k +  \bA^T \bL^{-1} ( \bb - \bA \bx_k), \qquad
		\bL = \code{slt}(\bA\bA^T) + \tfrac{1}{\omega} \bD \ ,
	\end{equation}
	where $\code{slt}(\cdot)$ extracts the strictly lower triangular part
	and $\bD = \mathrm{diag}(\bA \bA^T)$; as proved for the first time by Elfving and Nikazad \cite{elfving}.
	
	There are some more elaborate versions of Kaczmarz method available,
	for instance using block row partitioning or variable relaxation parameters
	\cite{elfving_2014,gordon}.
	In this paper we propose two new methods where we exploit the sequential version \eqref{eq:ART} of Kaczmarz with a fixed relaxation parameter.
	We believe the techniques may be generalized to the mentioned Kaczmarz schemes, but this is outside the scope of this work.
	In contrast, we will see in Section~\ref{sec:rkm} that it is not obvious to combine the proposed methods with a randomized Kaczmarz scheme.

	From \eqref{eq:Elfving_theorem}, Kaczmarz's method can be seen as an attempt to solve the system
	\begin{equation}\label{eq:augmented_system}
		\bA^T \bL^{-1} \bA \bx = \bA^T \bL^{-1} \bb,
	\end{equation}
	which is always consistent, whether or not \eqref{eq:Axb} is.
	Indeed, supposing $\bx_{k+1}$ approaches $\bx_k$ in the limit of large $k$, we end up with $\bA^T \bL^{-1}(\bb-\bA \bx) = \zero$, which is equivalent to \eqref{eq:augmented_system}.
	We have not come across of the following two lemmas in the literature.
	
	\begin{lemma}\label{lem:nullspace}
		The nullspaces of $\bA^T\bL^{-1}\bA$ and its transpose are identical, and equal to the nullspace of $\bA$.
	\end{lemma}
	\begin{proof}
		While it is straightforward that $\mathcal{N}(\bA) \subseteq \mathcal{N}(\bA^T \bL^{-1} \bA)$, it is also true that $\mathcal{N}(\bA) \supseteq \mathcal{N}(\bA^T \bL^{-1} \bA)$ by the following argument.
		In view of \eqref{eq:Elfving_theorem}, $\bL+\bL^T$ is symmetric positive definite (SPD) for $\omega \in (0,2)$, and therefore so is the congruence transform $\bL^{-1}(\bL+\bL^T)\bL^{-T} = \bL^{-1}+\bL^{-T}$; it follows from Sylvester's law of inertia that all eigenvalues remain positive under such transformations. Let $\bx \neq \zero$ be such that $\bA^T\bL^{-1}\bA\bx = \zero$.
		Left-multiplication by $\bx^T$ and using the fact that the symmetric part of $\bL^{-1}$ is SPD shows that $\bA\bx = \zero$.
		The statement for the transpose follows from an identical reasoning with $\bL^T$ taking the role of $\bL$.
	\end{proof}
	
	\begin{lemma}\label{lem:consistent_equivalent_kaczmarz}
		If \eqref{eq:Axb} is consistent, \eqref{eq:augmented_system} is equivalent to \eqref{eq:Axb}.
	\end{lemma}
	\begin{proof}
		It is obvious that \eqref{eq:Axb} implies \eqref{eq:augmented_system}, the opposite follows the fact that there exists some $\by$ such that $\bA \by = \bb$. From \eqref{eq:augmented_system} we find $\bA^T \bL^{-1} \bA (\bx - \by) = \zero$. Applying Lemma~\ref{lem:nullspace} shows that $\bA\bx = \bA\by = \bb$.
	\end{proof}
	
	Let $\mathcal{V}$ be the row space of $\bA$, which is the orthogonal complement of the nullspace of $\bA$.
	Lemma~\ref{lem:nullspace} asserts that $\mathcal{V}^\perp$ is also the nullspace $\bA^T \bL^{-1} \bA$, so that $\mathcal V$ is the row space, while any linear operator is nonsingular when the kernel is removed. The restriction is related to the pseudoinverse in the following way
	\begin{equation*}
		(\bA^T \bL^{-1} \bA)^{\dagger} = (\bA^T \bL^{-1} \bA\vert_{\mathcal{V}})^{-1} P_\mathcal{V},
	\end{equation*}
	where $P_\mathcal{V}$ denotes the orthogonal projection onto $\mathcal{V}$. The pseudoinverse is defined for any vector, while the domain of the restriction is only $\mathcal{V}$. We will use the restriction for clarity and preciseness.
	Kaczmarz's method is a row-action scheme: all iterates as well their limit are constructed as a linear combination of the rows of $\bA$, i.e., they are in $\mathcal{V}$. This means that Kaczmarz's method provides the minimum-norm solution subject to the constraint that \eqref{eq:augmented_system} holds.
	This solution can be expressed in terms of the inverse of the restricted operator:
	\begin{equation} \label{eq:mp}
		\bx := (\bA^T \bL^{-1} \bA\vert_{\mathcal{V}})^{-1} \bA^T \bL^{-1} \bb.
	\end{equation}
	Since $\boldsymbol{\Pi} := \bA (\bA^T \bL^{-1} \bA\vert_{\mathcal{V}})^{-1} \bA^T \bL^{-1}$ is an oblique projection onto the span of $\bA$, we observe that
	$\bA\bx = \boldsymbol{\Pi}\bb$, the image of $\bb$ under this oblique projection.
	In the case of a consistent system \eqref{eq:Axb}, $\bb$ is in the span of $\bA$ and $\boldsymbol{\Pi} \bb= \bb$, reaffirming Lemma~\ref{lem:consistent_equivalent_kaczmarz} from a different viewpoint.
	We note that $\bx$ can also be seen as the orthogonal projection onto the row space of $\bA$ of an arbitrary $\by$ satisfying $\bA\by = \boldsymbol{\Pi}\bb$.
	
	Using \eqref{eq:Elfving_theorem} we can interpret Kaczmarz's method as a fixed-point method, since we have
	\begin{equation}\label{eq:fixed_point_form}
		\bx_{k+1} = \bG \, \bx_k + \bA^T \bL^{-1} \bb, \qquad
		\bG := \bI - \bA^T \bL^{-1} \bA ,
	\end{equation}
	where $\bI$ is the identity matrix. Note that the iteration matrix $\bG$ is independent of the row scaling of $\bA$; if we premultiply $\bA$ by any diagonal matrix, $\bG$ is unaltered.
	Moreover, we point out that $\bG$ is the identity on $\mathcal{V}^\perp$, the nullspace of $\bA$. For any initial guess $\bx_0 \in \mathcal{V}$, we can without loss of generality restrict $\bG$ to $\mathcal{V}$. In particular, our initial guess is always $\bx_0 = \zero$, so that we can always restrict $\bG$ to $\mathcal{V}$ with impunity.
	We emphasize that $\bG$ depends on the relaxation parameter $\omega$ via~$\bL$; cf.~\eqref{eq:Elfving_theorem}.
	Using \eqref{eq:augmented_system}, we find
	\begin{equation}\label{eq:fixed_point_down-sweep}
		\bx_{k+1} - \bx = \bG \, \bx_k - \bx + \bA^T \bL^{-1}
		\bA \, \bx = \bG \, (\bx_k - \bx),
	\end{equation}
	where $\bx$ is given by \eqref{eq:mp}.
	For fixed-point methods, we need the following well-established result, which we adapt from \cite[Thm.~4.1]{saad}.
	
	\begin{lemma}\label{lem:saad}
		Kaczmarz's method with $\bx_0 \in \mathcal{V}$ (so particularly for our choice $\bx_0 = \zero$) converges to \eqref{eq:mp} if and only if the spectral radius of the restricted iteration matrix is strictly smaller than 1, i.e., $\rho(\bG\vert_\mathcal{V})<1$.
	\end{lemma}
	
	Of course, the convergence of Kaczmarz's method has been studied since its introduction; we recall the following well-known result from \cite{elfving}.
	
	\begin{lemma}\label{lem:consisent_kaczmarz}
		Kaczmarz's method is convergent for linear system \eqref{eq:Axb} for any $\omega \in (0,2)$. If $\bb$ lies in the span of $\bA$, then the method converges to a solution of $\eqref{eq:Axb}$. If in addition $\bx_0 \in \mathcal{V}$, then the Kaczmarz converges to the minimal norm solution to \eqref{eq:Axb}.
	\end{lemma}
	
	The two lemmas combined show that Kaczmarz's method always converges to a solution of \eqref{eq:augmented_system}, and if $\bx_0 \in \mathcal{V}$, then it provides the minimum-norm solution. From Lemma~\ref{lem:consistent_equivalent_kaczmarz}, we see that if \eqref{eq:Axb} is consistent, \eqref{eq:augmented_system} is equivalent to \eqref{eq:Axb} and Kaczmarz provides a solution to \eqref{eq:Axb}.
	
	Additionally, the two lemmas also combine to inform us on the eigenvalues of $\bG$. The following result, which follows from the fact that Kaczmarz's method converges, is in the line of \cite{tanabe}, but stated in terms of the restricted operator $\bG$.
	
	\begin{theorem}\label{thm:eigenvalues_G}
		The restricted iteration matrix of Kaczmarz's method $\bG\vert_\mathcal{V} = (\bI - \bA^T \bL^{-1} \bA)\vert_\mathcal{V}$  satisfies $\rho(\bG\vert_\mathcal{V})<1$ for any $0<\omega<2$ and any $\bA$.
	\end{theorem}
	
	Aside from any consistency of \eqref{eq:Axb}, we also have to deal with noise, which is a slightly more subtle concept.
	Noise in the right-hand side $\bb$ will generally have components both inside and outside of the span of $\bA$. The reconstruction $\bx$ may be greatly affected by the noise, but, of course, only by the noise component in the span of $\bA$. We now introduce several concepts to analyze this situation. Denote the noise-free image by $\bx_\mathrm{nf}$, and define $\bb\ex := \bA \bx_\mathrm{nf}$. If we would execute the Kaczmarz process until convergence using $\bb\ex$ as right-hand side, we would obtain a solution $\bx\ex$ satisfying $\bx\ex = (\bA^T\bL^{-1}\bA\vert_{\mathcal{V}})^{-1} \bA^T \bL^{-1} \bb\ex$. We refer to $\bx\ex$ as the noise-free solution. As $\bb\ex$ is in the span of $\bA$, we conclude from Lemma~\ref{lem:consistent_equivalent_kaczmarz} that $\bA \bx\ex = \bb\ex$, and that $\bx\ex$ is the orthogonal projection of $\bx_\mathrm{nf}$ onto the row space of $\bA$. Given a measured data vector $\bb$, the noise vector is defined by $\delta \bb := \bb - \bb\ex$. In our numerical experiments in Section~\ref{sec:numerical_experiments}, we simulate $\delta \bb$ as a Gau{\ss}ian random vector with zero mean and standard deviation $\std$. Unlike the stopping rules we review in Section~\ref{sec:stoprules}, which require normally distributed noise to work, the methods we propose do not need any assumptions on the statistical nature of the noise.
	
	We also make a corresponding splitting in the reconstruction iterates by introducing $\bbx_k$ as the iterates of Kaczmarz's method with right-hand side $\bb\ex$. The iteration error is then defined as $\bbx_k - \bx\ex$, while the noise error is defined as $\bx_k - \bbx_k$. Obviously, the error with respect to the noise-free solution is the sum of the iteration error and the noise error, i.e.,
	\begin{equation}
		\be_k = \bx_k - \bx\ex = \underbrace{\bx_k - \bbx_k}_{\text{\normalfont noise error}} + \underbrace{\bbx_k - \bx\ex}_{\text{\normalfont iteration error}}.
	\end{equation}
	As we will see in Section~\ref{sec:errorgaugeandsemi}, the iteration error diminishes while the noise error grows as the iteration progresses.

	\subsection{Up-sweep Kaczmarz}
	
	The up-sweep version of Kaczmarz's method relates in an appealing way to the down-sweep method. The following result is implicit in Elfving and Nikazad \cite{elfving}; we merely provide an explicit demonstration.
	
	\begin{proposition} The up-sweep iteration matrix $\widetilde{\bG}$ is related to the down-sweep iteration matrix $\bG$ \eqref{eq:Elfving_theorem} by transposition, i.e.,
		\begin{equation}
			\widetilde{\bG} = \bG^T.
		\end{equation}
	\end{proposition}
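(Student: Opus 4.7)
The plan is to reduce the up-sweep to a down-sweep on a row-reordered system, then apply formula \eqref{eq:Elfving_theorem} and track what happens to the matrix $\bL$ under the row reversal.

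Let $\bP$ denote the $m \times m$ reversal permutation matrix, i.e., $\bP_{ij}=1$ iff $i+j=m+1$. Processing the rows of $\bA$ in the order $j=m-i+1$ is identical to performing a down-sweep on the reordered system with matrix $\widetilde{\bA} = \bP \bA$ and right-hand side $\bP\bb$; the iterate $\bx$ itself is unaffected by $\bP$ since it lives in $\mathbb{R}^n$. Applying \eqref{eq:Elfving_theorem} to this reordered system therefore yields
\begin{equation*}
  \widetilde{\bG} = \bI - \widetilde{\bA}^T \widetilde{\bL}^{-1} \widetilde{\bA},
  \qquad
  \widetilde{\bL} = \code{slt}\!\big(\widetilde{\bA}\widetilde{\bA}^T\big) + \tfrac{1}{\omega}\,\mathrm{diag}\!\big(\widetilde{\bA}\widetilde{\bA}^T\big).
\end{equation*}
Since $\widetilde{\bA}\widetilde{\bA}^T = \bP (\bA\bA^T) \bP^T$, this sets up the problem of tracking $\code{slt}$ and $\mathrm{diag}$ under conjugation by $\bP$.

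The main step, and the one small piece of real content, is the observation that for any matrix $\bM\in\mathbb{R}^{m\times m}$ the identity $\code{slt}(\bP\bM\bP^T) = \bP\,\code{sut}(\bM)\,\bP^T$ holds, where $\code{sut}$ extracts the strictly upper triangular part; this is immediate from the index computation $(\bP\bM\bP^T)_{ij} = M_{m-i+1,\,m-j+1}$, which flips the triangular region. Applied to the symmetric matrix $\bM = \bA\bA^T$, and combined with the trivial fact that $\mathrm{diag}(\bP\bM\bP^T) = \bP\,\mathrm{diag}(\bM)\,\bP^T$, this gives
\begin{equation*}
  \widetilde{\bL} = \bP\!\left(\code{slt}(\bA\bA^T)^T + \tfrac{1}{\omega}\bD\right)\!\bP^T = \bP\,\bL^T\bP^T,
\end{equation*}
so $\widetilde{\bL}^{-1} = \bP\,\bL^{-T}\bP^T$.

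It only remains to substitute and use $\bP^T\bP = \bI$:
\begin{equation*}
  \widetilde{\bA}^T \widetilde{\bL}^{-1} \widetilde{\bA}
  = \bA^T \bP^T (\bP\,\bL^{-T}\,\bP^T) \bP \bA
  = \bA^T \bL^{-T} \bA,
\end{equation*}
so that $\widetilde{\bG} = \bI - \bA^T\bL^{-T}\bA = (\bI - \bA^T\bL^{-1}\bA)^T = \bG^T$, as claimed. There is no serious obstacle here; the only substantive ingredient is the triangularity-swap identity under conjugation by the reversal permutation, and the symmetry of $\bA\bA^T$ is precisely what converts $\code{sut}$ back into $\code{slt}^T$ and produces the transpose.
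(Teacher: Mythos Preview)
Your proof is correct and follows essentially the same approach as the paper's: both reduce the up-sweep to a down-sweep on the row-reversed system, identify $\widetilde{\bL} = \bP\bL^T\bP^T$ via the triangularity-swap identity $\code{slt}(\bP\bM\bP^T)=\bP\,\code{sut}(\bM)\,\bP^T$ combined with the symmetry of $\bA\bA^T$, and then cancel the permutations. The only cosmetic difference is that the paper exploits $\bP^T=\bP$ for the reversal permutation and writes $\bP$ everywhere instead of $\bP^T$.
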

	\begin{proof}
		Let $\bJ$ denote the reverse identity, i.e., the permutation matrix that reverses the ordering. We investigate what happens when we apply the cyclical down-sweep Kaczmarz's method to the system $\bJ \bA \bx = \bJ \bb$. Note first that $\bJ^T = \bJ = \bJ^{-1}$, so that $\bJ \bA \bA^T \bJ^T = \bJ \bA \bA^T \bJ $,
		where pre- and post-multiplying $\bA$ by $\bJ$ results in the flipping over of both the columns and the rows respectively. From this, we see that
		\begin{equation*}%Latex doesn't like the tag here unless equation*-environment is used.
			\widetilde{\bD} =
			\mathrm{diag} (\bJ \bA \bA^T \bJ) = \bJ \, \mathrm{diag} (\bA \bA^T) \, \bJ, \tag{a}
		\end{equation*}
		since flipping over both columns and rows of a diagonal matrix yields a diagonal matrix with the entries reversed. Here, $\widetilde{\bD}$ is the up-sweep analogue of $\bD$. Next, it may be checked that the strictly lower triangular part of some matrix $\bJ \bM \bJ$ is the strictly {\em upper} triangular part of $\bM$, flipped over both columns and rows. Hence, the relation is exactly $\code{slt}(\bJ \bM \bJ) = \bJ \, \code{sut}(\bM) \, \bJ$, where \code{sut} takes the strictly upper triangular part. Applying this to the symmetric matrix $\bA \bA^T$, we obtain
		\begin{equation*}\label{eq:flipped_AAT}
			\code{slt} (\bJ \bA \bA^T \bJ) = \bJ \, \code{sut} (\bA \bA^T)\bJ = \bJ \, \code{slt} (\bA \bA^T)^T \bJ. \tag{b}
		\end{equation*}
		Putting (a) and (b) together, we find that
		\begin{equation*}
			\widetilde{\bL} = \bJ \big( \code{slt} (\bA \bA^T)^T  + \tfrac{1}{\omega}  \bD \big) \bJ = \bJ \bL^T \bJ,
		\end{equation*}
		where $\widetilde{\bL}$ is the up-sweep analogue of $\bL$.
		Thus, when we inspect the up-sweep iteration matrix $\widetilde{\bG}$, we find
		\begin{equation*}
			\widetilde{\bG} := \bI - (\bJ \bA)^T \big(\bJ \bL^{-T} \bJ \big) (\bJ \bA) = \bI - \bA^T \bL^{-T} \bA.
		\end{equation*}
		We identify this last matrix as $\bG^T$, which completes the proof.
	\end{proof}
	
	\begin{keypoint}The iteration matrix of the up-sweep Kaczmarz method is the transpose of the down-sweep iteration matrix. Consequently, they have \emph{the same eigenvalues}.
	\end{keypoint}
	
	\subsection{Statistical stopping rules} \label{sec:stoprules}
	To use the semi-convergence of Kaczmarz's method for noisy
	data we need a stopping rule for terminating the
	iterations near the point of semi-convergence.
	Ideally we prefer a stopping rule based on an estimate of the reconstruction error
	$\| \bx_k - \bx\ex \|$.
	Several such rules have been proposed for regularization methods
	that can be implicitly expressed as a filtered SVD expansion
	\cite[Sec.~7.3]{Hansen1998}; they do not apply to Kaczmarz's method, as Kaczmarz cannot be represented in this way.
	The stopping rule presented in this work does not have this limitation.
	
	Instead of estimating the reconstruction error, several statistical stopping rules based on the prediction error have been proposed.
	These rules, which indeed apply to Kaczmarz's method, seek to minimize the norm of the prediction error
	for the $k$th iteration, defined as
	\begin{equation}
		\prederr_k = \bb\ex - \bA \, \bx_k \ .
	\end{equation}
	However, since this is unavailable, the methods work instead with the norm of the residual vector $\bb - \bA\,\bx_k$.
	One way to do so involves the trace of the \emph{influence matrix} $\bA \bA^\#_k$, where $\bx_k = \bA^\#_k \bb$ and $\bA^\#_k$ is the action of Kaczmarz's method.
	The trace of this matrix features in many stopping rules. For iterative regularization methods the trace is often estimated by means of a Monte Carlo approach as proposed in \cite{Girard89} and \cite{SantosPierro03}. This is done using a normally distributed random vector $\boldsymbol{\xi}_0$, which is used as an initial guess for Kaczmarz's method applied to the system $\bA \boldsymbol{\xi} = \zero$. The inner product of $\boldsymbol{\xi}_0$ with the iterate $\boldsymbol{\xi}_k$ provides the trace estimate. Employing the quadratic form identity \cite[Thm.~5.2a]{rencher}, it can be shown that
	\begin{equation}
		\mathbb{E} (\boldsymbol{\xi}_0^T \boldsymbol{\xi}_k) = m -\tr(\bA \bA_k^\#).
	\end{equation}
	A Monte Carlo method typically draws many samples to estimate a quantity. However, each sample would require another instance of Kaczmarz's method, quickly becoming very expensive. In practice often only a single sample is drawn.

	We can now summarize the three statistical stopping rules that we compare with in this work.
	In the \emph{unbiased predictive risk estimation} (UPRE) method we find the $k$ that minimizes the expected prediction estimation error norm $\mathbb{E}( \| \prederr_k \|^2 )$.  This is done by minimizing the quantity
	\begin{equation}
		\| \bb - \bA\,\bx_k \|^2 + 2\,\std^2 \, \tr ( \bA \bA^\#_k ) - \std^2 \, m \ .
	\end{equation}
	The \emph{generalized cross validation} (GCV) method also seeks to minimize the expected prediction error, and it does so without the need for the noise's standard deviation~$\std$. Here we find the $k$ that minimizes
	\begin{equation}
		\frac{ \| \bb - \bA\,\bx_k \|^2 }{ \bigl(m - \tr ( \bA \bA^\#_k ) \bigr)^2 } .
	\end{equation} 
	The \emph{compensated discrepancy principle} (CDP) was defined by Turchin \cite{turchin} for Tikh\-o\-nov regularization. The underlying idea is to determine the largest iteration number for which we cannot reject $\bx_k$ -- computed from the noisy data -- as a possible solution to the noise-free system, cf.~\cite[p.~93]{turchin}.
	Here we stop at the first iteration $k$ for which
	\begin{equation}
		\| \bb - \bA\,\bx_k \|^2 \leq \std^2\,\bigl( m-\tr ( \bA \bA^\#_k ) \bigr) \ .
	\end{equation}
	A derivation of UPRE is given in \cite[Sec.~7.1]{vogel} while summaries of
	GCV and CDP can be found in \cite[Secs.~7.2 and 7.4]{Hansen1998}. The methods we reviewed here require the assumption that $\delta\bb$ is white Gau{\ss}ian noise, while the methods we propose do not require any assumptions on the statistical nature of the noise.
	
	\section{The Error Gauge and the \TM\label{sec:error_gauge_TA}}
	\label{sec:gauge}
	Given two numerical methods designed to solve the same problem, a general approach to error estimation is to take the difference of the two numerical solutions $\bx_k$ and $\bup_k$. One way to reason about this is by adding and subtracting the noise-free solution $\bx\ex$.
	\begin{equation}\label{eq:error_gauge}
		\begin{aligned}
			\| \bx_k - \bup_k \|^2 &= \| \bx_k - \bx^\star + \bx^\star - \bup_k \|^2 \\
			&= \| \be_k - \widetilde{\be}_k \|^2 \\
			&= \| \be_k \|^2 + \| \widetilde{\be}_k \|^2 - 2 \cos(\varphi_k) \,
			\| \be_k \| \, \| \widetilde{\be}_k \|,
		\end{aligned}
	\end{equation}
	where $\be_k$ and $\widetilde{\be}_k$ are their respective errors,
	and $\varphi_k$ is the angle between the two errors $\be_k$ and $\widetilde{\be}_k$.
	We will exploit the expression involving the cosine in Section~\ref{sec:errorgaugeandsemi}.

	\subsection{Analysis of an error gauge: the noise-free consistent case}
	\label{sec:analysis}
	Here, we propose to employ \eqref{eq:error_gauge} as an \emph{error gauge}, which estimates the accuracy of the iterates.
	When the approximations are different, but converge to the same solution at the same rate,
	the difference between them will vanish at the same rate.
	Therefore this gives us a simple error gauge.
	
	To obtain two different iterates $\bx_k$ and $\bup_k$ of Kaczmarz's
	method to be used in \eqref{eq:error_gauge}, we use down-sweeps and up-sweeps, respectively.
	
	\begin{keypoint}
		For Kaczmarz's method, the iteration matrix $\bG$ is generally not symmetric, even for symmetric $\bA$.
	\end{keypoint}
	
	Before stating the results, we recall that
	the condition number of a simple eigenvalue $\lambda$ is given by
	\begin{equation}
		\label{eq:kappa}
		\kappa(\lambda) = |\widetilde\bv^H \bv|^{-1},
	\end{equation}
	where $\widetilde\bv$ and $\bv$ are the normalized left and right eigenvectors, respectively \cite[Chap.~1, Sec.~3.2]{stewart}, and
	$\boldsymbol{\cdot}^H$ denotes the conjugate transpose.
	A simple eigenvalue $\lambda$ is called normal if $\kappa(\lambda) = 1$, which is the case if and only if $\bv$ and $\widetilde\bv$ coincide.
	In the following proposition we make two assumptions. First, we assume that $\bG$ is diagonalizable, which means that it has an eigenvalue decomposition;
	second, that $\lambda_1$ is a simple and nonnormal eigenvalue.
	Note that these two assumptions are generic properties, holding almost always.
	\footnote{A necessary condition for matrices to be nondiagonalizable is to have zero discriminant, defined by $\mathrm{dis}(\bG) = \prod_{i < j} (\lambda_i - \lambda_j)^2$ \cite[2.4.P21]{horn_johnson}.
		The set of matrices with $\mathrm{dis}(\bG) = 0$ constitutes a set of zero measure in the $n^2$-dimensional space, meaning that the first assumption indeed is a generic property.
		For the second assumption, given $\bG$ with its eigenpair $(\lambda, \bv)$ consider the Schur decomposition $\smtxa{cc}{\lambda_1 & \by^* \\ \zero & \bM}$, corresponding to any basis $[\bv \ \, \bV_{\perp}]$, where $\bV_{\perp} \in \mathbb C^{n \times (n-1)}$, $\bV_{\perp}^*\bv = \zero$, $\bM \in \mathbb C^{(n-1) \times (n-1)}$, and $\by \in \mathbb C^{n-1}$. Then $\lambda_1$ is a normal eigenvalue (so with $\widetilde \bv = \bv$) if and only if $\by = \zero$. This means that the property of being a simple and nonnormal eigenvalue is a generic property, since the constraint $\by = \zero$ also results in a set of zero measure.
	}.
	
	We now present two propositions. For the first, we assume that the eigenvalues of $\bG$ can be labeled according to their modulus as
	\begin{equation}
		|\lambda_n|  \leq \dots \leq |\lambda_2| < |\lambda_1| <1.     
	\end{equation}
	This implies that $\lambda_1$ is real. This assumption holds for many of the experiments that we carry out in Section~\ref{sec:numerical_experiments}.
	
	\begin{proposition}\label{prop:good_estimate}
		Suppose the iteration matrix $\bG \in \mathbb{R}^{n\times n}$ has an eigendecomposition and the largest eigenvalue in modulus $\lambda_1$ is isolated, with $|\lambda_1| > |\lambda_i|$ for all $i > 1$.
		We furthermore assume that $\lambda_1$ is simple and nonnormal and we denote its right and left eigenvector by $\bv_1$ and $\widetilde{\bv}_1$, respectively.
		Suppose that $\be_0$, the initial error for the down-sweep iterates, has a nonzero component $\gamma_1$ in the direction of $\bv_1$, and that the same holds for $\widetilde{\gamma_1}$, the component of $\widetilde{\be}_0$ in the direction of $\widetilde{\bv}_1$.
		
		Then, for consistent systems, the error gauge
		$\| \bx_k - \bup_k \|$ is asymptotically
		proportional to the true error norms $\| \be_k\|$ and $\| \widetilde{\be}_k \|$, i.e.,
		\begin{equation}\label{eq:asymptotic_proportionality}
			\begin{aligned}
				\frac{\| \bx_k - \bup_k \|}{\| \be_k \|} & = 
				\frac{\| \gamma_1 \bv_1 - \widetilde \gamma_1 \widetilde{\bv}_1 \|}{ |\gamma_1|  } \, 
				+ \mathcal{O}\left( \left| \frac{\lambda_2}{\lambda_1} \right|^{k} \right), \\
				\frac{\| \bx_k - \bup_k \|}{\| \widetilde{\be}_k \|} & = 
				\frac{\| \gamma_1 \bv_1 - \widetilde \gamma_1 \widetilde{\bv}_1 \|}{ |\widetilde \gamma_1| } \,  
				+ \mathcal{O}\left( \left| \frac{\lambda_2}{\lambda_1} \right|^{k}      \right)
			\end{aligned}
		\end{equation}
		for large $k$.
		Moreover, the right-hand sides are nonzero.
	\end{proposition}
	\begin{proof}
		The system is consistent, so that \eqref{eq:fixed_point_down-sweep} holds, i.e.,
		\begin{equation*}
			\be_{k+1} = \bG\be_k \qquad \hbox{and} \qquad
			\widetilde{\be}_{k+1} = \bG^T \widetilde{\be}_k .
		\end{equation*}
		The eigendecomposition allows us to write
		\begin{equation*}
			\be_k   = \lambda_1^k \, \gamma_1\,  \bv_1 + \sum_{j=2}^n \lambda_j^k \, \gamma_j \, \bv_j = \lambda_1^k \, \Big( \gamma_1 \bv_1 + \sum_{j=2}^n \left(\frac{\lambda_j}{\lambda_1}\right)^k \gamma_j \bv_j \Big).
		\end{equation*}
		The error of the up-sweeps admits an analogous expression. From the assumption that $\lambda_1$ is isolated, it follows that
		\begin{equation*}
			\| \be_k \| = | \lambda_1|^k \, \big( |\gamma_1 | + \mathcal{O} \big( \big| \tfrac{\lambda_2}{\lambda_1} \big|^k \big) \big).
		\end{equation*}
		However, we also have 
		\begin{equation*}
			\| \bx_k - \bup_k \| = \|  \be_k - \widetilde{\be}_k \|
			= | \lambda_1 |^{k} \, \left( \| \gamma_1 \bv_1 - \widetilde{\gamma_1} \widetilde{\bv}_1 \| + \mathcal{O}\big( \big| \tfrac{\lambda_2}{\lambda_1} \big|^{k} \big) \right).
		\end{equation*}
		The assumption that $\lambda_1$ is nonnormal is equivalent to
		$\bv_1$ and $\widetilde{\bv}_1$ being linearly independent.
		Consequently, we have $\| \gamma_1 \bv_1 - \widetilde{\gamma_1} \widetilde{\bv}_1 \|>0$,
		since the only way to produce a zero constant is to have $\gamma_1 = \widetilde{\gamma_1} = 0$, which by assumption does not occur.
		Therefore we can conclude \eqref{eq:asymptotic_proportionality}.
	\end{proof}
	
	Next, we consider the situation that $\lambda_1$ is non-real and therefore $\lambda_2 = \overline{\lambda_1}$, the complex conjugate of $\lambda_1$, so that $|\lambda_2| = |\lambda_1|$.
	This case may also arise in our numerical experiments for some choices of the problem parameters. Since $\lambda_1$ is usually (very) close to 1 and inside the complex unit circle, the imaginary part is very modest.
	We make the generic assumption that $|\lambda_1| > |\lambda_3|$.
	
	\begin{proposition}\label{prop:good_estimate2}
		Given the context of Proposition~\ref{prop:good_estimate}, but now with $\lambda_2 = \overline{\lambda_1}$, and $|\lambda_1| > |\lambda_3|$, we have
		\begin{equation*}
			\begin{aligned}
				\frac{\| \bx_k - \bup_k \|}{\| \be_k \|} & = 
				\frac{\| \gamma_1 \bv_1 + \overline \gamma_1 \overline \bv_1
					- \widetilde \gamma_1 \widetilde \bv_1 - \overline{\widetilde \gamma_1} \, \overline {\widetilde \bv_1} \|}{ \|\gamma_1 \bv_1 + \overline \gamma_1 \overline \bv_1\| } \,  + \mathcal{O}\big( \big| \tfrac{\lambda_3}{\lambda_1} \big|^{k} \big), \\
				\frac{\| \bx_k - \bup_k \|}{\| \widetilde{\be}_k \|} & = 
				\frac{\| \gamma_1 \bv_1 + \overline \gamma_1 \overline \bv_1
					- \widetilde \gamma_1 \widetilde \bv_1 - \overline{\widetilde \gamma_1} \, \overline {\widetilde \bv_1} \|}{ \|\widetilde \gamma_1 \widetilde \bv_1 + \overline{\widetilde \gamma_1} \, \overline{\widetilde \bv_1}\| } \, + \mathcal{O}\big( \big| \tfrac{\lambda_3}{\lambda_1} \big|^{k} \big)
			\end{aligned}
		\end{equation*}
	\end{proposition}
	\begin{proof}
		This follows easily from the fact that the asymptotically dominant term of $\be_k$ is 
		$\lambda_1^k \gamma_1 \bv_1 + \overline {\lambda}_1^k \, \overline \gamma_1 \overline{\bv_1}$, with a similar expression for $\widetilde e_k$.
	\end{proof}
	
	\begin{keypoint}
		The error gauge $\| \bx_k - \bup_k \|$
		depends crucially on the fact that $\bG$ is not symmetric.
	\end{keypoint}
	
	Put in colloquial terms, Propositions~\ref{prop:good_estimate} and \ref{prop:good_estimate2} assert that the error gauge
	$\| \bx_k - \bup_k \|$ is a good estimate of the iteration error of a consistent system.
	The intuitive argument is simple: both methods converge to the same solution at the same rate, but along different paths, so that the difference vanishes at the same rate as the errors.
	
	\begin{figure}
		\centering
		\includegraphics[width=\textwidth]{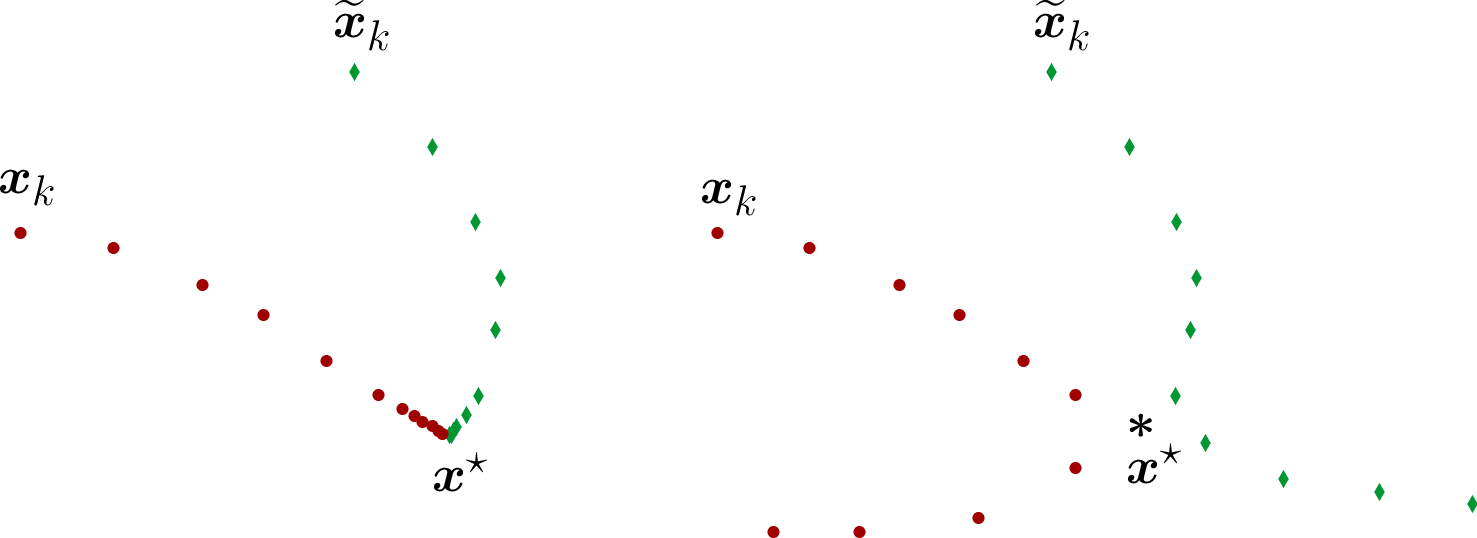}
		\caption{Sketch of semi-convergence and the error gauge. Left: consistent case, both sequences converge to the same point.
			Right: noisy case, the noise error causes a divergence away from the noise-free solution $\bx^\star$.}
		\label{fig:error_estimate_sketch}
	\end{figure}

	\subsection{Analysis of the noisy case}
	
	Having established the error gauge $\| \bx_k - \bup_k \|$ we now turn to an analysis of the behavior of Kaczmarz's method for noisy data.
	
	\subsubsection{Semi-convergence}
	
	Kaczmarz's method applied to noisy CT problems exhibits semi-convergence: the error first decreases and the iterates approach the noise-free solution, after which the noise component starts to dominate and the iterates move away from the noise-free solution. The minimal error is known as the semi-convergence point. The semi-convergence behavior is sketched in Figure~\ref{fig:error_estimate_sketch}.
	The nature of the first part of this subsection may be considered folklore of Kaczmarz type methods; 
	we try to present a thorough analysis here.
	
	Recall that the data vector is split according to $\bb = \bb\ex + \delta \bb$ with a noise-free consistent part $\bb\ex$ and a noise part $\delta \bb$. We also recall that analogously the error is split into an iteration error $\bbx_k - \bx\ex$ and a noise error $\bx_k - \bbx_k$, where $\bbx_k$ are the iterates of the noise-free consistent system using $\bb\ex$ as the right-hand side.
	When we use an empirical data vector $\bb$ instead of a noise-free consistent data vector $\bb\ex$ in
	Kaczmarz's down-sweep method, we obtain from \eqref{eq:fixed_point_form}
	\begin{equation}
		\bx_k  = (\bI+\bG + \cdots + \bG^{k-1}) \, \bA^T \bL^{-1}(\bb\ex + \delta \bb) \nonumber, \label{eq:polynomial_form}
	\end{equation}
	Since $\bA^T \bL^{-1} (\bb\ex + \delta \bb) \in \mathcal{V}$, we can restrict the geometric sum to $\mathcal{V}$ and get
	\begin{equation}
		\bx_k = (\bI - \bG^k) (\bA^T\bL^{-1}\bA\vert_{\mathcal{V}})^{-1} \, \bA^T \bL^{-1}(\bb\ex + \delta \bb).
	\end{equation}
	We now employ the identity $\bb\ex = \bA \bx\ex$ and use the fact that $\bx\ex$ is in the row space of $\bA$, yielding
	\begin{equation}\label{eq:exact_iterate}
		\bx_k = (\bI - \bG^k)\,\bx\ex +  (\bI - \bG^k) (\bA^T\bL^{-1}\bA\vert_{\mathcal{V}})^{-1} \, \bA^T \bL^{-1} \delta \bb.
	\end{equation}
	We thus arrive at an exact expression for the error
	\begin{equation}\label{eq:exact_error_behavior}
		\bx_k - \bx\ex = -\bG^k \bx\ex + (\bI - \bG^k) (\bA^T\bL^{-1}\bA\vert_{\mathcal{V}})^{-1} \bA^T \bL^{-1} \delta \bb .
	\end{equation}
	Hence, the splitting in the empirical data vector leads to the splitting of the iteration error and the noise error, i.e.,
	\begin{subequations}
		\begin{align}
			\bbx_k - \bx\ex &= - \bG^k \bx\ex, \label{eq:exact_iteration_error} \\
			\bx_k - \bbx_k &= (\bI - \bG^k) \bf, \label{eq:exact_noise_error}
		\end{align}
	\end{subequations}
	where we have defined (cf.~\eqref{eq:mp})
	\begin{equation}\label{eq:inverted_noise}
		\bf = (\bA^T\bL^{-1}\bA\vert_{\mathcal{V}})^{-1} \bA^T \bL^{-1} \delta \bb.
	\end{equation}
	This is the limiting vector of the noise error iterates for $k \to \infty$, and it therefore may be called the ``inverted noise.'' The expression \eqref{eq:inverted_noise} should be compared with \eqref{eq:mp}; it represents exactly the part that one would like to suppress in a regularized solution.
	
	We will now show that not only $\rho(\bG\vert_\mathcal{V})<1$, but also the stronger result that $\|\bG\vert_{\mathcal{V}}\| <1$. (It is stronger since any consistent norm is an upper bound for the spectral radius.)
	As for Theorem~\ref{thm:eigenvalues_G}, we derive this property from a known fact of Kaczmarz type methods.
	
	\begin{lemma}\label{lem:Gnorm_bound}The iteration matrix $\bG$ satisfies $\| \bG\vert_{\mathcal{V}} \| <1$.
	\end{lemma}
	\begin{proof}
		Both $\bG$ and its transpose maps $\mathcal{V}$ onto itself.
		The two-norm of $\bG\vert_{\mathcal{V}}$ is given by $\sqrt{\rho(\bG^T \bG\vert_{\mathcal{V}})}$, but $\bG^T \bG$ is the iteration matrix of the method known as the symmetric Kaczmarz method from Elfving and Nikazad \cite[Prop.~11]{elfving}. This method is convergent and applying Lemma~\ref{lem:saad} concludes the proof.
	\end{proof}
	
	Kaczmarz's method is convergent, and Lemma~\ref{lem:Gnorm_bound} shows that it is even monotonically convergent, as $\|\be_{k+1}\| = \|\bG\be_k\| \leq \|\bG\vert_\mathcal{V}\| \, \|\be_k\| < \|\be_k\|$. This also implies that the noise error forms a monotonic sequence with respect to its limiting vector $\bf$.
	
	\begin{proposition}\label{prop:noise_error_norm} The noise error satisfies
		\begin{equation}\label{eq:noise_error_convergence}
			\bx_{k+1} - \bbx_{k+1} - \bf = \bG \, (\bx_{k} - \bbx_{k} - \bf ),
		\end{equation}
		with $\bf$ from \eqref{eq:inverted_noise}.  Moreover, $\|\bx_{k+1} - \bbx_{k+1} - \bf\|$ constitutes a monotonically decreasing sequence, so that
		\begin{equation}
			\lim_{k \to \infty} \bx_{k} - \bbx_{k}  = \bf.
		\end{equation}
	\end{proposition}
	
	\begin{proof}
		From \eqref{eq:fixed_point_form}, the noise error is given by
		\begin{equation*}
			\bx_{k+1} - \bbx_{k+1} = \bG \, (\bx_{k} - \bbx_{k}) + \bA^T \bL^{-1} \delta \bb.
		\end{equation*}
		Since $\bG = \bI - \bA^T \bL^{-1} \bA$, we can write
		\begin{equation*}
			\bx_{k+1} - \bbx_{k+1} = \bG \, (\bx_{k} - \bbx_{k}) + (\bI - \bG)\,\bf,
		\end{equation*}
		which resolves into \eqref{eq:noise_error_convergence}. After taking norms, applying Lemma~\ref{lem:Gnorm_bound} completes the proof.
	\end{proof}
	
	The following proposition is, to the best of our knowledge, new. It shows that the noise error must increase, which is a fundamental insight into the semi-convergence behavior of Kaczmarz's method.
	
	\begin{proposition}\label{prop:noise_error_monotone_bound} The norm of the noise error is bounded from below by
		\begin{equation}\label{eq:noise_error_lower_bound}
			\| \bx_k - \bbx_k \| \geq  \big(1- \| \bG\vert_{\mathcal{V}} \|^k \big) \| \bf \| ,
		\end{equation}
		where $\bf$ is defined in \eqref{eq:inverted_noise}. Moreover, if $\bA^T \bL^{-1} \delta \bb \neq \boldsymbol{0}$, then the lower bound forms a monotonically increasing sequence.
	\end{proposition}
	
	\begin{proof}
		We take the norm and apply the reverse triangle inequality
		\begin{equation*}
			\| \bx_k - \bbx_k \| \geq \big| \| \bf \| - \| \bG^k \bf \| \big|.
		\end{equation*}
		We can now use the fact that $\| \bG^k \bf \| \leq \| \bG\vert_{\mathcal{V}} \|^k \| \bf \|$, with Lemma~\ref{lem:Gnorm_bound} asserting that $\| \bG\vert_{\mathcal{V}} \|<1$, to find \eqref{eq:noise_error_lower_bound}. Finally, the right-hand side of \eqref{eq:noise_error_lower_bound} is a monotonically increasing sequence if $\bf \neq \boldsymbol{0}$, which is equivalent to $\bA^T \bL^{-1} \delta \bb \neq \boldsymbol{0}$.
	\end{proof}

	Propositions~\ref{prop:noise_error_norm} and \ref{prop:noise_error_monotone_bound} show roughly where semi-convergence comes from: the iteration error decreases while the noise error increases. When that happens in just the right way, the error goes through a minimum. In the following, we employ a more phenomenological approach.

	\subsubsection{The error gauge and semi-convergence}
	\label{sec:errorgaugeandsemi}
	
	We can use \eqref{eq:polynomial_form} to define pseudo-iterates for any real number $\tau \geq 0$, using the spectral decomposition (or Jordan normal form) to define $\bG^\tau$. Let $f$ be the error of the pseudo-up-sweeps and $g$ be the error of the pseudo-down-sweeps, then the errors of the iterates are discrete samples of the continuous functions $f$ and $g$, i.e.,
	\begin{equation}
		f(k) = \| \be_k \|, \quad g(k) = \| \widetilde{\be}_k \|.
	\end{equation}
	The discrete error behavior of Kaczmarz's method is typically very benign, with the error being convex up to an inflection point, after which it is concave and reaches its asymptote from below. We assume that $f$ and $g$ exhibit these features in a continuous way.
	
	\begin{assumptions}
		We assume that $f$ and $g$ behave in the following way. There exists a $T>0$, such that $f:[0,T] \to \mathbb{R}$ and $g:[0,T] \to \mathbb{R}$ satisfy:
		\begin{enumerate}
			\item $f \geq f_\mathrm{min} >0$ and $g \geq g_\mathrm{min} >0$.
			\item $f^{\prime \prime} \geq c >0$ and $g^{\prime \prime} \geq c >0$.
			\item Both $f$ and $g$ attain their global minima in $(0,T)$.
		\end{enumerate}
	\end{assumptions}
	
	The minimizers of $f$ and $g$ give us directly the minimizers of $\| \be_k \|$ and $\| \widetilde{\be}_k \|$ by rounding to the nearest integer.
	
	The following is a well-known result from convex analysis (see, e.g., \cite[Sec.~3.2.1]{boyd}) We provide the proof, which is elementary, for completeness.
	
	\begin{lemma} \label{lem:minimum_location_sum_convex}
		Let $\phi:[a,b] \to \mathbb{R}$ and $\psi:[a,b] \to \mathbb{R}$ be strictly convex with unique minimizers in $(a,b)$. Let $\Phi$ be a nonnegative, nonzero weighted sum of $\phi$ and $\psi$, i.e., $\Phi= \alpha \phi+ \beta \psi$, with $\alpha \geq 0$, $\beta \geq 0$ while $\alpha + \beta >0$. Then, $\Phi$ has a unique minimum which lies in between the minimizers of $\phi$ and $\psi$.
	\end{lemma}
	
	\begin{proof}
		Let us say the minimum of $\phi$ occurs at $t_\phi$ and the minimum of $\psi$ occurs at $t_\psi$. Assume without loss of generality that $t_\phi \leq t_\psi$. Then, for $t< t_\phi$ we have $\phi^\prime(t) < 0$ and $ \psi^\prime (t) <0$, so that $\Phi^\prime(t) <0$. Likewise, for $t>t_\psi$, we have $\phi^\prime(t)>0$ and $\psi^\prime(t) > 0$ so that $\phi^\prime(t)>0$. Hence, $\Phi$ must have at least one minimum in $[t_\phi,t_\psi]$ by the intermediate value theorem. However, $\Phi$ is also strictly convex so that this minimum is unique.
	\end{proof}
	
	We now employ some general results from perturbation analysis of optimization problems to show that the minimizer of the error gauge will be close to the minimizers of the two continuous errors $f$ and $g$. The proof of the following result, which provides an upper bound, largely follows the line that of \cite[Prop.~4.32]{shapiro}.
	
	\begin{proposition}
		\label{prop:shift_local_bound}
		Let $S:[a,b] \to \mathbb{R}$ be a strongly convex function with convexity modulus $c$ and a perturbation $p:[a,b] \to \mathbb{R}$ be continuously differentiable. Let $t^\star$ be the minimizer of $S$ and $\tilde{t}$ be the global minimizer of $h=S + p$. Then, $p(t^\star) - p(\tilde{t}) \geq 0$ and
		\begin{equation}\label{eq:bounded_shift_gap}
			| \tilde{t} - t^\star | \leq \sqrt{\frac{  p(t^\star) - p(\tilde{t})}{c}} .
		\end{equation}
	\end{proposition}
	
	\begin{proof}
		Let us consider the difference in $S$ between $\tilde{t}$ and $t^\star$, i.e.,
		\begin{equation*}
			S(\tilde{t}) - S(t^\star) = h(\tilde{t}) - h(t^\star) - p(\tilde{t}) + p(t^\star).
		\end{equation*}
		Note that this is a positive quantity since $t^\star$ is the minimizer of $S$. However, since $\tilde{t}$ is assumed to be the global minimizer of $h$, we have $h(\tilde{t}) \leq h(t^\star)$, so that
		\begin{equation*}
			S(\tilde{t}) - S(t^\star) \leq p(t^\star) - p(\tilde{t}),
		\end{equation*}
		which also shows that $p(t^\star) - p(\tilde{t}) \geq 0 $. Since $S$ is strongly convex, we have $S(\tilde{t}) - S(t^\star) \geq c \, (\tilde{t}-t^\star)^2$, so that
		\begin{equation*}
			c \, (\tilde{t}-t^\star)^2 \leq p(t^\star) - p(\tilde{t}).
		\end{equation*}
		This leads to \eqref{eq:bounded_shift_gap}.
	\end{proof}
	
	We apply Proposition~\ref{prop:shift_local_bound} to our problem by using the functions $S = f^2 + g^2$ and $p = -2fg\cos(\varphi)$, where $\varphi$ is the continuous angle between the errors, see \eqref{eq:error_gauge}. It is easy to verify that $S$ is strongly convex if both $f$ and $g$ are. The proposition guarantees us that the shift in the location of the minimum will be bounded, but we can also argue that it must be relatively small. To see this, consider that the perturbation $p$ will be smallest around the minima of $f$ and $g$. Moreover, Lemma~\ref{lem:minimum_location_sum_convex} tells us that the minimizer of $S = f^2 + g^2$ is between those of $f$ and $g$. Therefore, the minimizer of the error gauge will be close to the minimizers of both true errors. We will demonstrate this in Section~\ref{sec:numerical_experiments}, see Figure~\ref{fig:casing_competition}.

	\subsection{The \TA}
	
	We now propose a new method that utilizes the error gauge as a stopping rule. We use here the shorthand $\Kdown(\bx)$ for a Kaczmarz down-sweep starting with $\bx$ and fixed parameter $0<\omega<2$. Likewise, $\Kup(\bup)$ denotes the twin Kaczmarz up-sweep. A possible implementation of the \TM\ in pseudocode is presented in Algorithm~1.

	\begin{algorithm}
		\caption{\textbf{\TA}}\label{alg:TA}
		\begin{algorithmic}[1]
			\Require $\bA$, $\bb$, $0 < \omega <2$, \code{maxits}
			\Output A regularized solution to \eqref{eq:Axb}.
			\State $\bx \leftarrow \zero$, \ $\bup \leftarrow \zero$
			\For{$k=1,\dots,$ \code{maxits}}
			\State $\bx \gets \Kdown(\bx)$
			\State $\bup \gets \Kup(\bup)$
			\IIf {$\| \bx - \bup\|$ is at a minimum} \textbf{break}, \EndIIf
			\EndFor
			\State \Return $\frac{1}{2} ( \bx + \bup)$.
		\end{algorithmic}
	\end{algorithm}
	
	The algorithm requires some maximum number of iterations \code{maxits}, which is simply a convenience. The output of the \TA\ is defined as the average of the stopped down- and up-sweeps. This is because we do not have any  preference for one over the other (this step can be skipped if multiple reconstructions are desirable).

	As with the statistical stopping rules, we have a sequence of positive real values whose minimum indicates where we should stop. Any strategy or method that is used to find the minimum for the statistical stopping rules can therefore be used for the error gauge. Typically, the functions occurring from the statistical stopping rules will not be smooth, and neither will the error gauge. As such, we opt to find the minimum by introducing a user-specified slack $s$, i.e.,
	a number of iterations to keep running after a (local) minimum has been found
	to accommodate any oscillations. The best approximation so far is stored.
	
	As an example, suppose that Kaczmarz is running and we come upon iteration $q$ and the slack is (re)initialized, and furthermore suppose that $e_{q+1}>e_q$, but $e_{q+2}<e_q$. In this case, the slack would reinitialize at $q+2$. We observe: a slack of size $s$ ignores oscillations of size $s-1$. Continuing the example, suppose that the smallest value of the error gauge has occurred at iteration $p$, then the algorithm will keep running at least until iteration $p+s$. If $e_p< e_j$ for $j=p+1,\ldots,p+s$, then the algorithm terminates at iteration $p+s$ and returns $\tfrac{1}{2}(\bx_p + \bup_p)$ as the reconstruction. Hence, we see that if $s$ is sufficiently large, the algorithm is guaranteed to find the global minimum of the error gauge.
	
	We use $s=7$, which is long enough for most oscillations we encountered in our test problems, but not so long that it wastes a lot of computational resources.

	\section{The \MSM}
	\label{sec:MSA}
	
	Up till now we employed our error gauge to select the best iteration while leaving the iterative method unaltered.
	Alternatively, we can adopt the error gauge to modify the method in such a way that it precludes the necessity of a stopping rule altogether.
	Specifically, we will exploit the error gauge to determine step lengths that eventually diminish,
	causing the method to converge to a good approximation of the noise-free solution.

	\subsection{Motivation for a new method}
	Let us define $\sd_k$ and $\sdt_k$ as the search directions in the
	down-sweep and up-sweep versions of Kaczmarz's method, respectively;
	that is, they formally satisfy
	\begin{equation}
		\begin{aligned}
			\sd_k &= \bA^T \bL^{-1} ( \bb - \bA \, \bx_k) \ ,\\[1mm]
			\sdt_k &= \bA^T \bL^{-T} ( \bb - \bA \, \bup_k) \ .
		\end{aligned}
	\end{equation}
	Recall that this definition includes the relaxation parameter $\omega \in (0,2)$ via the matrix $\bL$.
	Iteration $k{+}1$ of the down-sweep method is then given by $\bx_k + \sd_k$.
	A simple modification to the method allows for an iteration-dependent step size
	$\alpha_k$, so that $\bx_k + \alpha_k \sd_k$ is the next iteration.
	Similarly, we define $\bup_k + \beta_k \sdt_k$ as the
	next up-sweep iterate.
	Ideally, one wishes to plug these expressions into the exact error to
	find the optimal step sizes.
	However, the exact error is not available, so we use the error gauge instead.
	Hence, we aim to compute the step length parameters that solve the minimization problem
	\begin{equation}\label{eq:min_steps}
		\{\alpha_k,\beta_k\} := \arg \min_{\alpha,\beta} \, \tfrac{1}{2} \, \| \bx_k + \alpha \sd_k -
		\bup_k - \beta \, \sdt_k \|^2,
	\end{equation}
	which minimizes the error gauge of iteration $k+1$. We can find the minimum of \eqref{eq:min_steps} by setting the derivatives with respect to $\alpha$ and $\beta$ to zero, yielding
	\begin{equation}\label{eq:step_size_system}
		\left[ \begin{array}{cc} \| \sd_k \|^2 & -\sd_k^T \sdt_k \\[1mm]
			-\sd_k^T \sdt_k & \| \sdt_k \|^2 \end{array} \right]
		\left[ \begin{array}{c} \alpha_k \\ \beta_k \end{array} \right]
		= \left[ \begin{array}{r} - \sd_k^T (\bx_k - \bup_k ) \\[1mm]
			\sdt_k^T (\bx_k - \bup_k ) \end{array} \right].
	\end{equation}
	We solve this linear system for $\alpha_k$ and $\beta_k$ to obtain the step sizes.
	It is important to note that this system is nonsingular when the vectors
	$\sd_k$ and $\sdt_k$ are linearly independent.
	In this case, the $2 \times 2$ coefficient matrix in \eqref{eq:step_size_system}
	is symmetric positive definite.
	Of course, since the two search directions come from up-sweeps and down-sweeps,
	they will generally be linearly independent.
	If the search directions do happen to be dependent, we set $\alpha_k = 0$ and only solve for $\beta_k$.

	Because we are minimizing the distance between the up- and down-sweep iterates at every iteration by choosing suitable step sizes, the distance cannot increase. 
	Therefore the error gauge, which is this distance, will form a monotonically decreasing sequence.
	This immediately leads to the following result.
	
	\begin{proposition}\label{prop:converging_method}
		Suppose that $\sd_k$ and $\sdt_k$ are linearly independent for all $k$.
		By using the step sizes from \eqref{eq:step_size_system}, the error gauge
		$\| \bx_k - \bup_k \|$ converges to a local minimum.
	\end{proposition}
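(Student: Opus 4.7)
The proof plan rests on a single monotonicity observation combined with strict convexity of the per-iteration step-size subproblem. At each iteration $k$, by construction, $(\alpha_k, \beta_k)$ is the minimizer over $\mathbb{R}^2$ of the quadratic
\[
    \phi_k(\alpha, \beta) = \tfrac{1}{2} \, \| \bx_k + \alpha \, \sd_k - \widetilde{\bx}_k - \beta \, \widetilde{\sd}_k \|^2 ,
\]
whose Hessian is precisely the $2 \times 2$ coefficient matrix of \eqref{eq:step_size_system}. Linear independence of $\sd_k$ and $\widetilde{\sd}_k$ makes this Hessian symmetric positive definite, so $\phi_k$ is strictly convex and has a unique global minimizer, namely the solution of \eqref{eq:step_size_system}. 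This is the only place the hypothesis on the search directions enters the argument.

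The rest follows by exploiting that $(\alpha, \beta) = (0, 0)$ is always feasible, giving $\phi_k(0, 0) = \tfrac{1}{2} \| \bx_k - \widetilde{\bx}_k \|^2$ as an upper bound on the optimal value $\phi_k(\alpha_k, \beta_k) = \tfrac{1}{2} \| \bx_{k+1} - \widetilde{\bx}_{k+1} \|^2$. Hence
\[
    \| \bx_{k+1} - \widetilde{\bx}_{k+1} \|^2 \, \leq \, \| \bx_k - \widetilde{\bx}_k \|^2 ,
\]
so the scalar sequence $\{\|\bx_k - \widetilde{\bx}_k\|^2\}$ is non-increasing and bounded below by zero, and therefore converges to some limit $L \geq 0$ by monotone convergence. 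For the local-minimum characterization, I would then observe that the closed-form decrement $\phi_k(0,0) - \phi_k(\alpha_k,\beta_k) = \tfrac{1}{2} \, g_k^T H_k^{-1} g_k$, with $g_k$ the right-hand side and $H_k$ the coefficient matrix of \eqref{eq:step_size_system}, must tend to zero. Under the mild assumption that the Hessian eigenvalues remain bounded this forces $g_k \to 0$, i.e., $\sd_k^T(\bx_k - \widetilde{\bx}_k) \to 0$ and $\widetilde{\sd}_k^T(\bx_k - \widetilde{\bx}_k) \to 0$, so that no further decrease of the gauge is possible along the current pair of search directions.

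The main conceptual obstacle is pinning down what ``local minimum'' ought to mean here: the quadratic being minimized changes with $k$, since the two-parameter subspace of admissible perturbations depends on the current search directions. Consequently the limit is a local minimum only relative to the algorithm's instantaneous search directions, rather than a local minimum of a single globally defined objective. The bare convergence of the gauge is essentially immediate; the delicate part is making this interpretative statement --- and the accompanying boundedness of $H_k$ --- precise.
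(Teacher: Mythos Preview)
Your argument is correct and matches the paper's approach: the paper's proof is simply that the error gauge is monotonically non-increasing (by the same $(0,0)$-feasibility observation you make) and bounded below by zero, hence convergent. Your additional analysis of the decrement $g_k^T H_k^{-1} g_k \to 0$ and the orthogonality conditions is not part of the paper's proof of this proposition; the paper defers exactly that reasoning to a separate corollary immediately afterward, and, like you, does not make the phrase ``local minimum'' fully precise beyond the instantaneous search-direction interpretation you identify.
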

	
	\begin{proof}
		The error gauge is a monotonically decreasing sequence, while it is trivially bounded from below, i.e., $\| \bx_k - \bup_k \| \geq 0$.
	\end{proof}
	
	\begin{corollary}\label{cor:stop_orthogonality}
		If $\sd_k$ and $\sdt_k$ are linearly independent for all $k$,
		the step sizes $\alpha_k$ and $\beta_k$ converge to zero. Moreover, the asymptotic search directions $\sd$ and $\sdt$ are related to the limiting approximations $\bx$ and $\bup$ by
		\begin{equation}\label{eq:orthogonal_search}
			\sd^T (\bx - \bup) = 0 , \qquad \sdt^T (\bx - \bup) = 0.
		\end{equation}
	\end{corollary}
	
	\begin{proof}
		By assumption, $\sd_k$ and $\sdt_k$ are linearly independent, so that there is no linear combination resulting in the zero vector other than $\alpha_k = \beta_k = 0$. Proposition~\ref{prop:converging_method} asserts that the error gauge converges. Therefore, the step sizes must vanish as well, otherwise the error gauge would change.
		Second, since the system \eqref{eq:step_size_system} is symmetric positive definite, again by the assumption that $\sd_k$ and $\sdt_k$ are linearly independent, the zero solution can only occur when the right-hand side vanishes.
	\end{proof}
	
	Corollary~\ref{cor:stop_orthogonality} provides us the possibility to design a stopping criteria. According to \eqref{eq:orthogonal_search}, we can stop when
	\begin{equation}\label{eq:orthogonal_stop}
		\frac{|\sd^T_k (\bx_k - \bup_k)|}{ \| \sd_k \| \cdot \| \bx_k - \bup_k\| } \leq \varepsilon_1 \quad \text{and} \quad \frac{|\sdt^T_k (\bx_k - \bup_k)|}{ \| \sdt_k \| \cdot \| \bx_k - \bup_k\| } \leq \varepsilon_1,
	\end{equation}
	which is to say that the angles with the error gauge and the search directions are close enough to orthogonal.
	
	An additional stopping criterion also follows from Proposition~\ref{cor:stop_orthogonality}; a naive approach might be to stop simply when $|\alpha| + |\beta|$ falls below a specified threshold. This is guaranteed to happen since the method is convergent. It is worth pausing here to appreciate this remarkable result. The semi-convergence property has been completely circumvented, and yet, the \MSM\ will converge to an approximation of the semi-convergence point.
	
	\begin{keypoint}The \MSM\ is \emph{convergent}.
	\end{keypoint}
	
	The naive stopping criterion $|\alpha| + |\beta| \leq \varepsilon_2$ can be somewhat hard to interpret, as we have no control over the norm of the search directions $\sd_k$ and $\sdt_k$. Hence, we suggest an equivalent stopping criterion that has an easy interpretation as the relative change in the reconstructions. We propose to stop when
	\begin{equation}\label{eq:relative_change_stop}
		|\alpha_k| \frac{ \| \sd_k \| }{ \| \bx_k \| } + |\beta_k| \frac{ \| \sdt_k\| }{\| \bup_k \| } \leq  \varepsilon_2.
	\end{equation}
	Clearly, $|\alpha_k| \frac{ \| \sd_k \| }{ \| \bx_k \| } = \frac{\|\bx_{k+1}-\bx_k\|}{\|\bx_k\|}$, which is indeed the relative change in the down-sweep reconstruction. Hence, when the sum of the relative changes falls below the threshold $\varepsilon_2$, we stop. This is guaranteed to happen by Corollary~\ref{cor:stop_orthogonality}. To summarize: we test for both criteria \eqref{eq:orthogonal_stop} and \eqref{eq:relative_change_stop}, if either triggers, we stop.
	
	The threshold values $\varepsilon_1$ and $\varepsilon_2$ can be chosen as large as the minimum relative error that can be achieved. This will largely come down to experience and educated guesses. However, since the method is convergent, it is always possible to use the final up- and down-sweeps as input. With the \MSM, it is possible to run a certain number of iterations and inspect the reconstructions. If it is suspected that a better reconstruction is possible, the up- and down-sweeps can be reinserted as input. In the worst case, the images are unaltered. This is a distinct practical advantage that the \TA, or any statistical stopping rule for that matter, does not have.

	\subsection{The algorithm}
	
	We present here a possible implementation of the \MSM\ in pseudocode, see Algorithm~\ref{alg:MSA}. Similar to the \TA, the \MSA\ also computes two reconstructions. Since we have no bias towards one or the other, we again define the final reconstruction as the average of the two. Again, this step is not crucial for the algorithm, and it can be skipped if desired.

	\begin{algorithm}
		\caption{\textbf{\MSA}}\label{alg:MSA}
		\begin{algorithmic}[1]
			\Require $\bA$, $\bb$, $0 < \omega <2$, \code{maxits}, tolerances $\varepsilon_1$ and $\varepsilon_2$
			\Output A regularized solution to \eqref{eq:Axb}
			\State $\bx_0 \gets \Kdown(\zero)$, \ $\bup_0 \gets \Kup(\zero)$
			\State $\bx \leftarrow \bx_0$, \ $\bup \leftarrow \bup_0$
			\For{$k=1,\dots,$ \code{maxits}}
			\State $\sd \gets \Kdown(\bx) - \bx$
			\State $\sdt \gets \Kup(\bup) - \bup$
			\State Solve \eqref{eq:step_size_system} to determine $\alpha$ and $\beta$.
			\State \code{cond1} = $\{ \frac{|\sd^T (\bx - \bup)|}{\|\sd\| \, \|\bx - \bup\|} \leq \varepsilon_1$ \textbf{and}  $\frac{|\sdt^T (\bx - \bup)|}{\|\sdt\| \, \|\bx - \bup\|} \leq \varepsilon_1 \}$
			\State \code{cond2} = $\{|\alpha| \frac{ \| \sd \| }{ \| \bx \| } + |\beta| \frac{ \|\sdt\| }{\| \bup \| } \leq \varepsilon_2 \}$
			\IIf{ \code{cond1} \textbf{or} \code{cond2}  } \textbf{break},\EndIIf
			\State $\bx \gets \bx + \alpha \sd$
			\State $\bup \gets \bup + \beta \, \sdt$
			\EndFor
			\State \Return $\frac{1}{2} ( \bx + \bup)$.
		\end{algorithmic}
	\end{algorithm}
	
	The \MSA\ requires two different starting vectors;
	if we start with the same vector, the initial error gauge would be zero, causing the step sizes to come out zero. Consequently, the algorithm would exit immediately.
	Furthermore, according to \cite{elfving_2014},
	the starting vector should lie in the row space of $\bA$.
	Our approach is simply to use a single down-sweep and up-sweep starting from the zero vector.
	These vectors are different and lie in the correct space.

	\subsection{Computational cost}\label{sec:cost}
	
	We now turn to the computational cost of the \MSM .
	At this point, it is convenient to introduce a \textit{work unit}: a certain number of operations
	so that we can easily compare the cost of the various methods.
	The most convenient work unit in our context is a single Kaczmarz sweep.
	For X-ray tomography problems, the average number of nonzero elements
	in a row of $\bA$ is $\sqrt{n}$ for a 2D problem
	and $\sqrt[3]{n}$ for a 3D problem. Carefully going through the operations in \eqref{eq:ART} reveals that there are
	$4 m \sqrt{n}$ or $4 m \sqrt[3]{n}$ operations in a sweep.
	We omit operations that scale as constants as they will be negligible.
	Note that the cost of one matrix-vector multiplication is half a work unit.
	
	\begin{table}
		\renewcommand{\arraystretch}{1.3}
		\centering
		\caption{Work load of the various methods for a 2D problem; for a 3D problem replace
			$\sqrt{n}$ with $\sqrt[3]{n}$.
			A work unit is defined as the work in a single Kaczmarz sweep.}\label{tab:work_loads}
		\begin{tabular}{l|ll} \hline
			Method & Operations & Work units  \\ \hline
			Standard Kaczmarz & $\ph{1}4 m \sqrt{n}$ & 1 \\[0.5mm]
			Idem + trace-estimate stopping rules & $10m \sqrt{n} + 2 n + 3m$ &
			$\tfrac{5}{2} + \tfrac{1}{2} \frac{\sqrt{n}}{m} + \tfrac{3}{4} \frac{1}{\sqrt{n}}$ \\[0.5mm]
			\TM & $\ph{1}8 m \sqrt{n} + 3n$ & $2 + \tfrac{3}{4} \frac{\sqrt{n}}{m}$\\[0.5mm]
			\MSM & $\ph{1}8 m \sqrt{n} + 11n$ & $2 + \tfrac{11}{4} \frac{\sqrt{n}}{m}$ 
			\\[0.5mm] \hline
		\end{tabular}
	\end{table}

	For each method, we examine the total cost and express it in terms of work units in Table~\ref{tab:work_loads}.
	The \TM\ requires two Kaczmarz sweeps together with determination
	of the difference between the two iterates.
	The \MSM\ requires two Kaczmarz sweeps and the solution of the system \eqref{eq:step_size_system}.
	All the stopping rules (see Section~\ref{sec:numerical_experiments} for details)
	require a trace estimate which means that, per iteration, the additional amount of work is one Kaczmarz sweep, the determination of the residual and its norm, and an inner product.
	There are a small number of additional operations which we ignore here.
	
	Naturally, as $m$ and $n$ grow, the cost of each method becomes dominated
	by the cost of the Kaczmarz sweeps and the determination of the residual, if needed.
	The cost of inner products are evidently negligible in the larger scheme of things.
	Our proposed methods do not require the residual, so that their asymptotic cost is 2 work units
	per iteration. The stopping rules have an asymptotic cost of $\tfrac{5}{2}$ work units per iteration.
	Our methods are therefore slightly cheaper for large systems.
	As an example, for a small $128 \times 128$ image we have a system size of $n = 128^2 = 16384$ and $m \approx 1.2 \, n$.
	We should remark that this choice of $m$ is entirely arbitrary. In practice, a whole range of $m$ is used, from vastly underdetermined systems to extremely overdetermined.
	This amounts to a cost of about $2.005$ work units per iteration for the \TM\ and $2.02$ work units per iteration for the \MSM .

	In terms of storage, the \TA\ as we use it costs the same as the statistical stopping rules, while the \MSA\ is more expensive. The statistical stopping rules require three vectors of length $n$ to be stored: the actual image, the initial random Gau{\ss}ian noise vector and the Kaczmarz iterate that uses the random noise vector as its initial guess. The \MSA\ requires the search vectors and image vectors to be stored, which means it requires four $\mathbb{R}^n$-vectors of storage. The \TA\ with slack period requires storing the best reconstruction so far, together with two image vectors, resulting in 3 $\mathbb{R}^n$-vectors -- exactly the same as the statistical stopping rules. However, we must point out that it is possible in the \TA\ to employ other methods of detecting the minimum in the error gauge. The method presented here was chosen for performance reasons; it is guaranteed to find the global minimum provided the slack is sufficiently large. It is possible to use local estimators to find the minimum, which would bring the memory requirements down to only two vectors of length $n$.

	\section{Numerical experiments\label{sec:numerical_experiments}}
	To conduct our numerical experiments, we use the \textsc{AIR Tools II} package for MATLAB which contains various codes for the creation and solution of tomographic problems \cite{Hansen2018}. The package also contains a function \code{phantomgallery} that creates various phantoms with different features. We use the parallel beam set-up with $128 \times 128$ pixels per phantom, projection angles $0^\circ,1.5^\circ,3^\circ,\dots, 178.5^\circ$ and $\code{round}(\sqrt{2}\cdot 128) = 181$ rays per projection.
	Furthermore, as a tolerance for the \MSA\ we use $\code{tol1} = \code{tol2} = 10^{-4}$.
	
	\begin{keypoint}
		We will use $\bx_0 = \zero$ throughout this work, unless mentioned otherwise.
		This choice simplifies the expressions somewhat, but more importantly it serves as a good initial guess for noisy inverse problems.
	\end{keypoint}
	
	To simulate noise, we add white Gau{\ss}ian noise $\delta \bb \sim \mathcal{N}(\boldsymbol{0}, \sigma^2 \boldsymbol{I}_m)$ scaled such that we can specify the expected relative noise level to be $\eta$, i.e., 
	\begin{equation}
		\eta^2 =\frac{ \mathbb{E} \!\left(  \|\delta \bb\|^2 \right) }{ \|\bb\ex \|^2}  =
		\frac{m\,\sigma^2}{ \|\bb\ex \|^2} \ .
	\end{equation}
	
	\begin{figure}
		\centering
		\includegraphics[width=.9\textwidth]{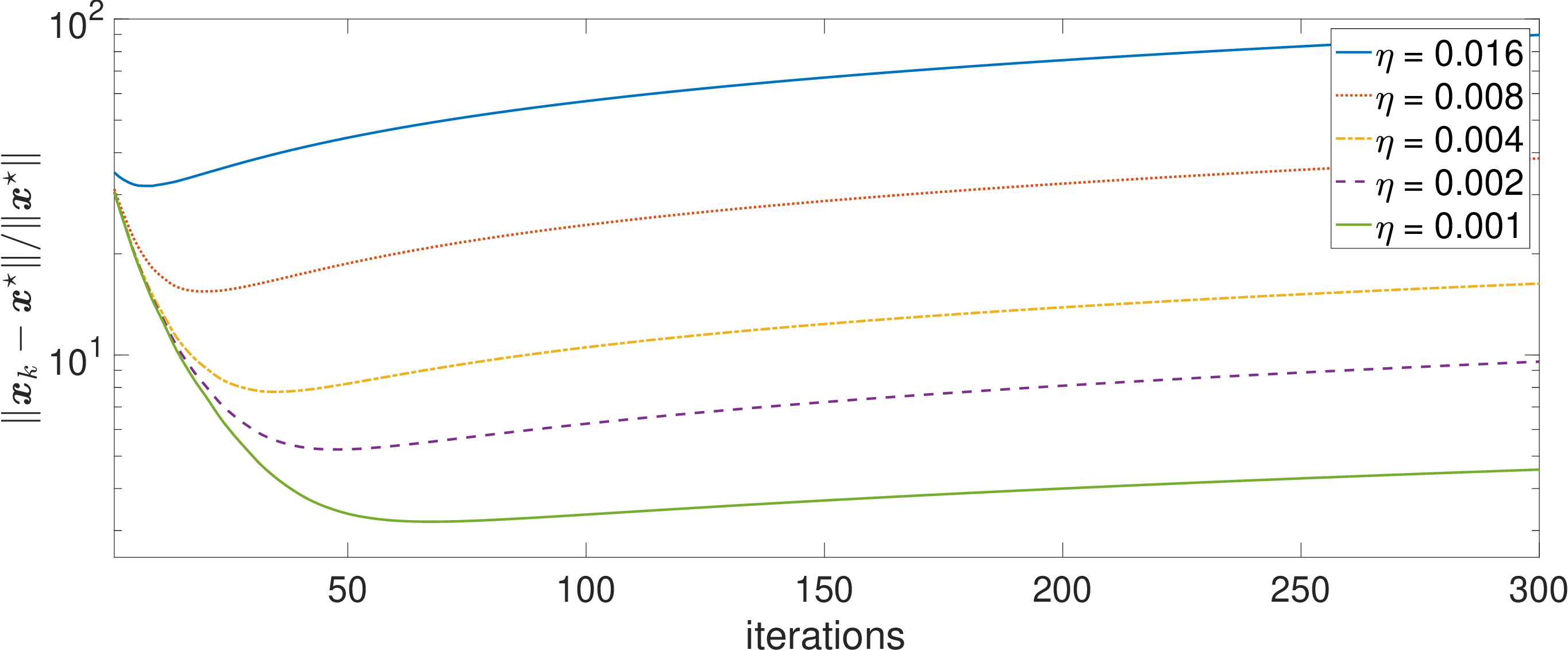}
		\caption{Error histories for Kaczmarz's method
			with $\omega=1$ and different relative noise levels $\eta$.}
		\label{fig:effect_of_noise}
	\end{figure}
	
	To demonstrate the effect of noise on the reconstruction, Figure~\ref{fig:effect_of_noise} shows error histories of the relative error $\| \bx_k - \bx^\star \| \ / \, \| \bx^\star \|$ for various noise levels.
	Our phantom of choice is the \code{grains} phantom, which simulates the polycrystalline structure found in many metals, rocks, and bones.
	We used the standard Kaczmarz (down-sweep) method with $\omega = 1$. We see that as $\eta$ increases the whole curve moves up and the minimum becomes less flat.
	
	For the lowest noise level, $\eta = 10^{-3}$, any iteration between $k=70$ and 100 gives almost the same relative error. However, for the highest noise level it is more critical to find the right number of iterations. The general trend is clear: for higher noise levels the stopping rule needs to be more accurate.
	
	\subsection{Casing the competition}\label{sec:casing}
	
	As the closest competitors of our proposed algorithms, we consider
	the standard Kaczmarz algorithm with either of the three statistical stopping rules
	from Section~\ref{sec:stoprules}: UPRE, GCV, and CDP\@.
	Our experience is that the performance of these stopping rules for Kaczmarz's algorithm is generally quite poor, especially for high noise levels, and to demonstrate this we show a representative error history for the \code{grains} phantom in Figure~\ref{fig:casing_competition}.

	\begin{figure}
		\centering
		\includegraphics[width=.9\textwidth]{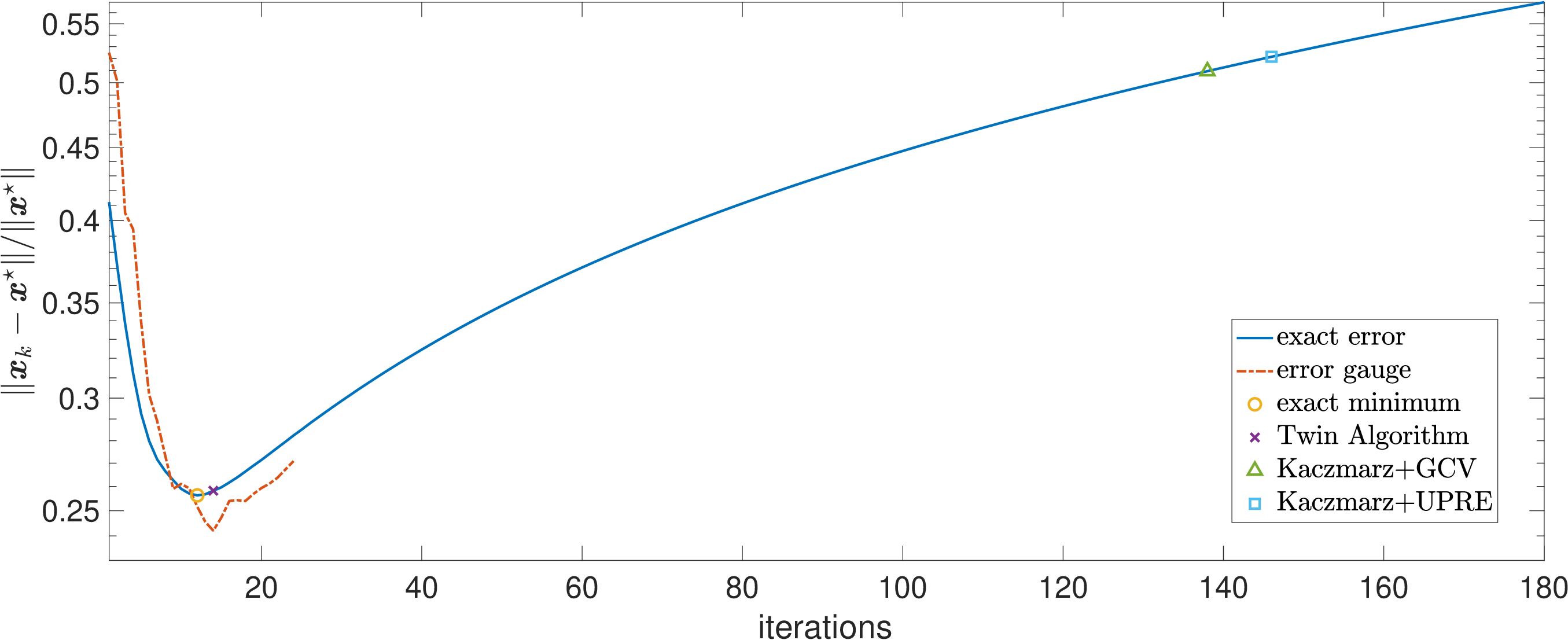}
		\caption{Comparing the proposed \TA\ with its built-in stopping rule to statistical ones applied to standard down-sweep Kaczmarz. Both the exact error of the standard Kaczmarz algorithm and the error gauge are scaled with the norm of the noise-free solution, i.e.,
			we show $\| \bx_k - \bx\ex \| \, / \, \| \bx^\star \|$ and $\| \bx_k - \bup_k \| \, / \, \| \bx^\star \|$, respectively.
		}
		\label{fig:casing_competition}
	\end{figure}

	It is clear that all three statistical stopping rules overshoot the mark by quite a margin. GCV and UPRE overshoot by roughly 100 iterations, while CDP did not stop for \code{maxits} $= 300$. As already mentioned in Section~\ref{sec:intro}, this may not matter very much for the simultaneous iterative methods where the minimum is very flat. For Kaczmarz's algorithm, on the other hand, there is a significant difference. For the current example, our error gauge stops within two iterations of the minimum and produces an image that is roughly 60\% better compared to the statistical stopping rules.

	As we have emphasised before, the output of the \TA\ is \textit{not} equal to any iteration of Kaczmarz's method. For ease of presentation, we have indicated in Figure~\ref{fig:casing_competition} the iteration at which the \TA\ stops. The error of the output -- the average of the up- and down-sweeps -- typically has a smaller error and lies below the exact error curve. The outputs of Kaczmarz equipped with GCV and UPRE are in fact on the curve, and this would also be the case for CDP be if it had stopped within \code{maxits} iterations. Instead of comparing the proposed algorithms with statistical stopping rules, we therefore from now on compare with the exact minimum.
	
	We will suppose that we have an oracle\footnote{The oracle is a concept borrowed from computational complexity theory \cite{papadimitriou}: oracle machines ``are machines that are given access to an ``oracle" that can magically solve the decision problem for some language". Here, we will take the oracle to magically provide the exact error of a reconstruction. }
	that can tell you the exact error of a reconstruction, but importantly, not anything else. Having access to the exact error allows one to pick the best iteration from the sequence of reconstructions generated by an iterative method. This is what we will compare our algorithms with.

	\subsection{Effect of the relaxation parameter}\label{sec:relax}

	Throughout this work, we have ignored the choice of a relaxation parameter. However, for many applications this is a crucial point and it is a fair question to ask whether or not the error gauge works for relaxation parameters other than $\omega = 1$. To demonstrate the affirmative, we present Figure~\ref{fig:varying_omega}, which is produced using the test problem as previously defined with the \code{grains} phantom and a relative noise level of $8 \cdot 10^{-3}$.
	
	\begin{figure}
		\centering
		\includegraphics[width=.9\textwidth]{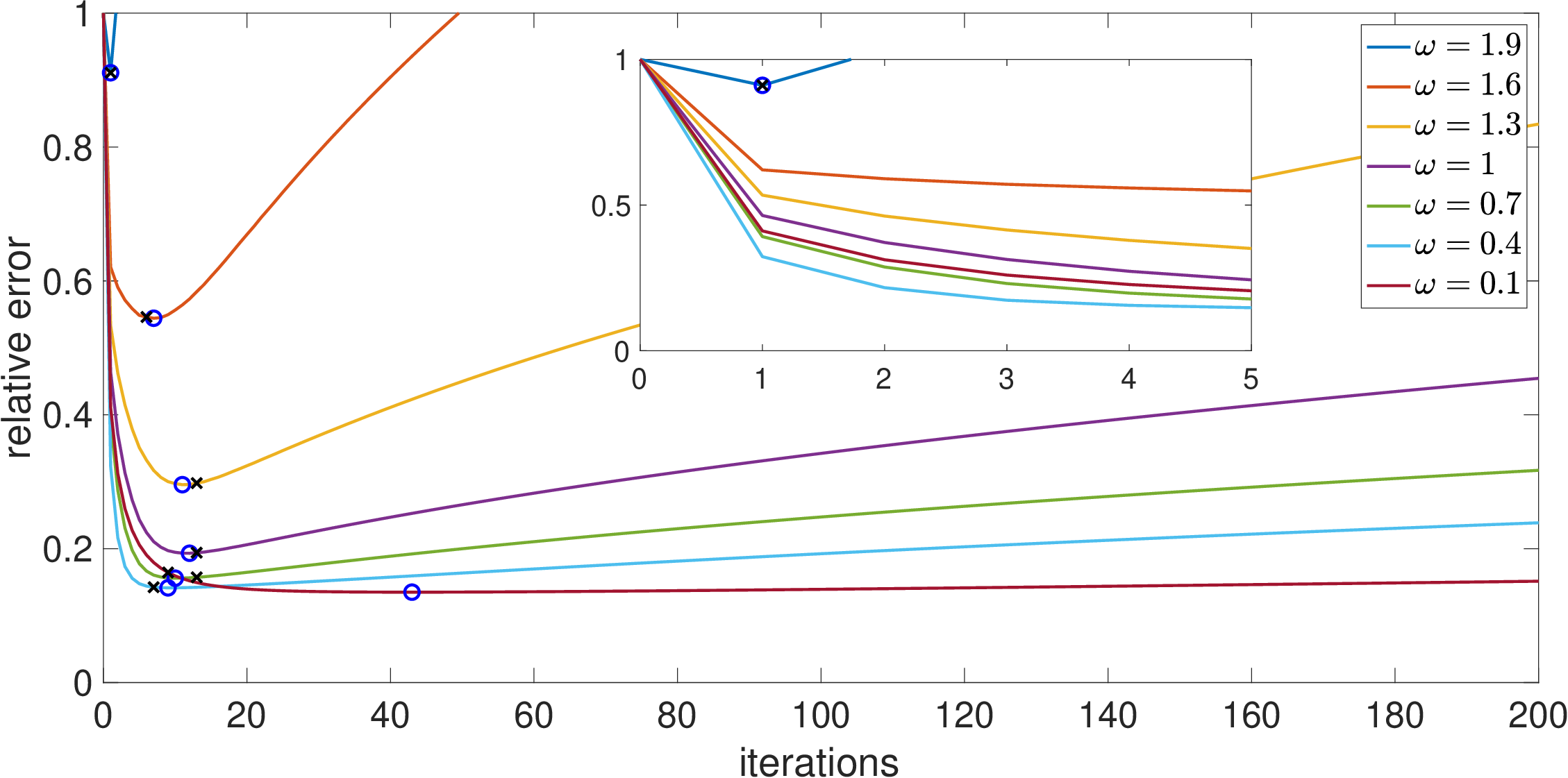}
		\caption{Varying the relaxation parameter $\omega$. The blue circles indicate the oracle's choice, i.e., the minimum of the error curve, while the crosses indicate the iteration picked out by the error gauge. The inset shows the first 5 iterations. The \code{threephases} phantom was used to test.
		}
		\label{fig:varying_omega}
	\end{figure}
	
	The figure shows that for a range of values $0< \omega < 2$, the error gauge picks out an iteration close to the oracle's choice, i.e., the iteration with the smallest error. The error gauge typically produces a good reconstruction, only being a couple of iterations off in most cases. 
	
	An interesting point to note about Figure~\ref{fig:varying_omega} is that a smaller relaxation parameter produces a smaller minimal error.
	This might be somewhat surprising, as asymptotic convergence theory shows that the optimal relaxation parameter satisfies $\omega>1$: it is known that Kaczmarz is equivalent to Successive Over-Relaxation applied to the system $\bA \bA^T \by = \bb$ with $\bx = \bA^T \by$ \cite{bjorck_elfving}, while it can be shown that $\omega>1$ results in the asymptotically optimal convergence rate for Successive Over-Relaxation \cite{saad}. However, we must note that we are optimizing different objectives: in the asymptotic case we optimize the convergence rate, while in the noisy case we are optimizing the minimal error at the semi-convergence point.
	The observed behavior of a smaller relaxation parameter producing a smaller error is quite consistent for the phantoms we tested with. Moreover, typically a smaller relaxation parameter also leads to slower convergence. As a compromise between computation time and performance, we suggest a relaxation parameter in the range of  $0.4 \leq \omega \leq 0.7$, which produces good results for all phantoms in a decent time.

	We must also note that the error gauge does not seem to work quite as well for very small relaxation parameters. However, as we have already pointed out, these cases are likely to be avoided for reasons of speed. In Figure~\ref{fig:varying_omega}, the error gauge only has difficulties with $\omega = 0.1$.
	
	\subsection{Variants of row-by-row Kaczmarz methods}
	
	We will also briefly demonstrate that our error gauge approach also works on some variants of Kaczmarz's method. Specifically, the symmetric Kazcmarz method and a special family of block-sequential Kaczmarz methods.
	
	The symmetric Kaczmarz method consists of performing an up-sweep after a down-sweep or the other way around. The iteration matrices are given by $\bG^T\bG$ and $\bG\bG^T$, which therefore share eigenvalues -- the squared singular values of $\bG$ -- but not eigenvectors. Indeed, the eigenvectors of $\bG^T\bG$ are the right-singular vectors of $\bG$, while for $\bG\bG^T$ we have the left-singular vectors as eigenvectors. We can therefore again construct an error gauge by comparing the up-down version with the down-up version.
	
	Following Elfving and Nikazad, we refer to a collection of rows of $A$ as a block,
	and each block is processed by computing the pseudoinverse \cite{elfving}. From the fact that Kaczmarz's method computes a minimum-norm solution to \eqref{eq:augmented_system}, it can be seen that Kaczmarz's method actually computes the pseudoinverse when the rows of $\bA$ are orthogonal and a starting guess $\bx_0 = \zero$ is used. Thus, if all blocks consist of sets of orthogonal rows, each can be treated by using Kaczmarz's method. The conclusion is that a block-sequential method with blocks of orthogonal rows is equivalent to a row-by-row Kaczmarz method. Moreover, when such blocks consist of structurally orthogonal rows, meaning each term in the inner product vanishes, the result can be computed in a parallelized way. S{\o}rensen and Hansen experimentally confirmed that such block-sequential methods perform very well and can be efficiently implemented in parallel \cite{sorensen}.

	\begin{figure}
		\centering
		\includegraphics[width=.9\textwidth]{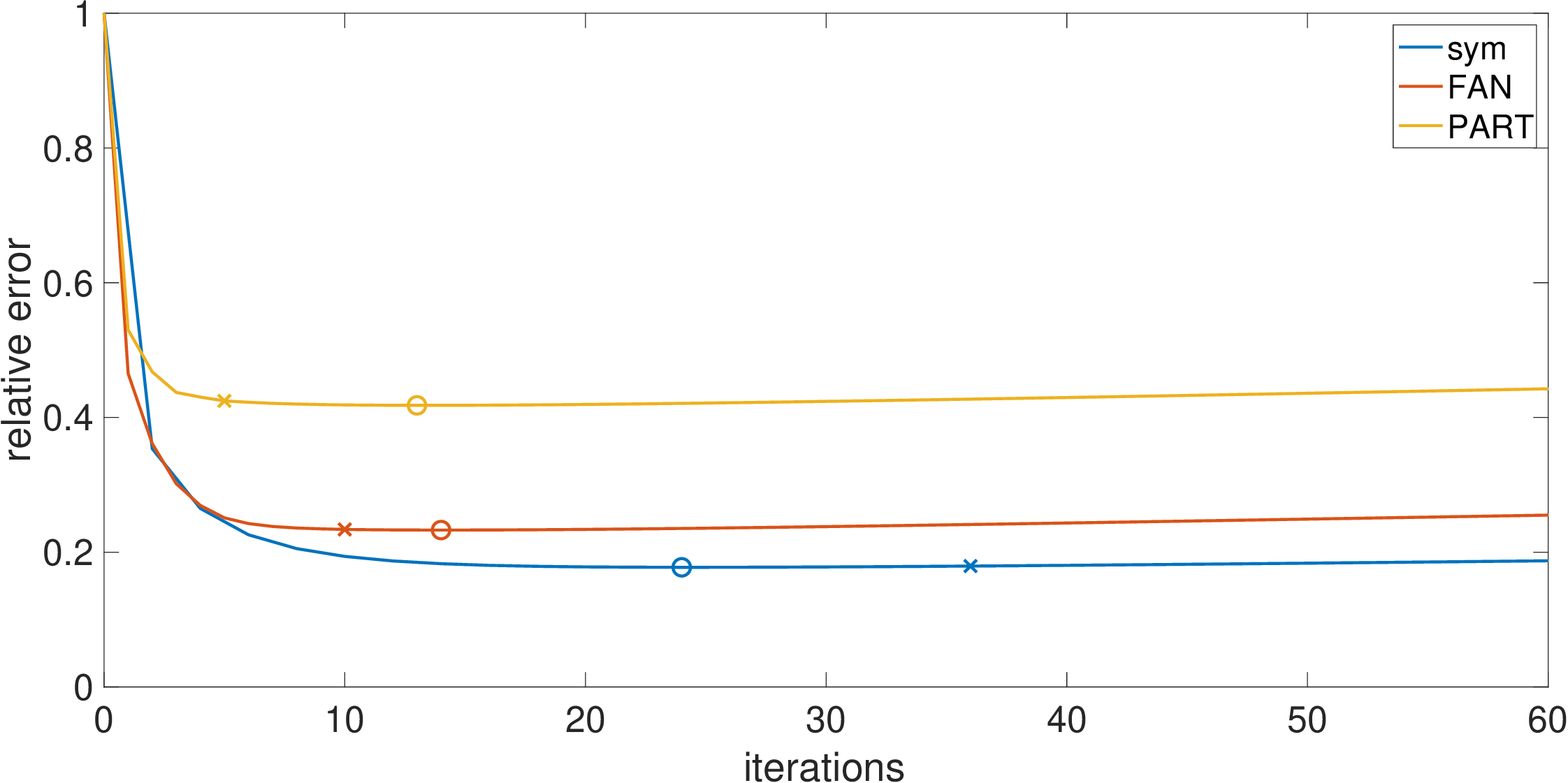}
		\caption{Error histories for the \code{shepplogan} phantom for various Kaczmarz methods that can be implemented as a row-by-row method. All methods use $\omega = 0.4$. Crosses indicate the error gauge stopping point, while circles indicate the minima.
		}
		\label{fig:family_of_methods}
	\end{figure}
	
	The results are plotted in Figure~\ref{fig:family_of_methods}. The methods are displayed in terms of iterations of Kaczmarz's method; note that one iteration of symmetric Kaczmarz takes two standard Kaczmarz iterations. The error gauge does not perform quite as well for the symmetric Kaczmarz method.
	
	The PART (parallel ART) blocking strategy has been first proposed by Gordon \cite{gordon}, who uses blocks whose rows correspond to parallel beams, ensuring each block consists of structurally orthogonal rows. Structural orthogonality between rows roughly corresponds to rays that do not intersect inside the measurement volume, hence it is not very difficult to construct other parallelizable versions of Kaczmarz. To demonstrate that this is indeed fairly easy, we have constructed a variant of PART that groups rays into fans with their shared intersection point outside the measurement volume. As a consequence, each block consists of structurally orthogonal rows. We have labeled this ``FAN'' in Figure~\ref{fig:family_of_methods}. All methods were run with a relaxation parameter of $\omega = 0.4$.

	\subsection{Randomized Kaczmarz methods}
	\label{sec:rkm}
	The idea of the error gauge does not seem to easily combine with randomized methods. We believe this is due to the fact that the error gauge requires a fixed set of eigenvectors to work. We can arbitrarily call $m$ updates an iteration and set up the iteration matrix, however, it would clearly change with every iteration.
	To test randomized Kaczmarz in conjunction with the error gauge, we employ a popular randomization: a random shuffle of the rows each sweep.
	The popularity stems from the fact that each row is used exactly once in an iteration, thereby making sure all of the data is used. The shuffle can be generated by, e.g., the Fisher--Yates algorithm \cite[Vol.~2, Sec.~3.4.2, Algo.~P]{knuth}. The iterate $\bx_k$ is computed using the shuffled ordering, while $\bxu_k$ is computed using the reversed shuffled ordering.
	The results are plotted in Figure~\ref{fig:rand_eg_fail}. A fixed ordering is also shown in the figure for comparison.
	
	\begin{figure}
		\centering
		\includegraphics[width=.9\textwidth]{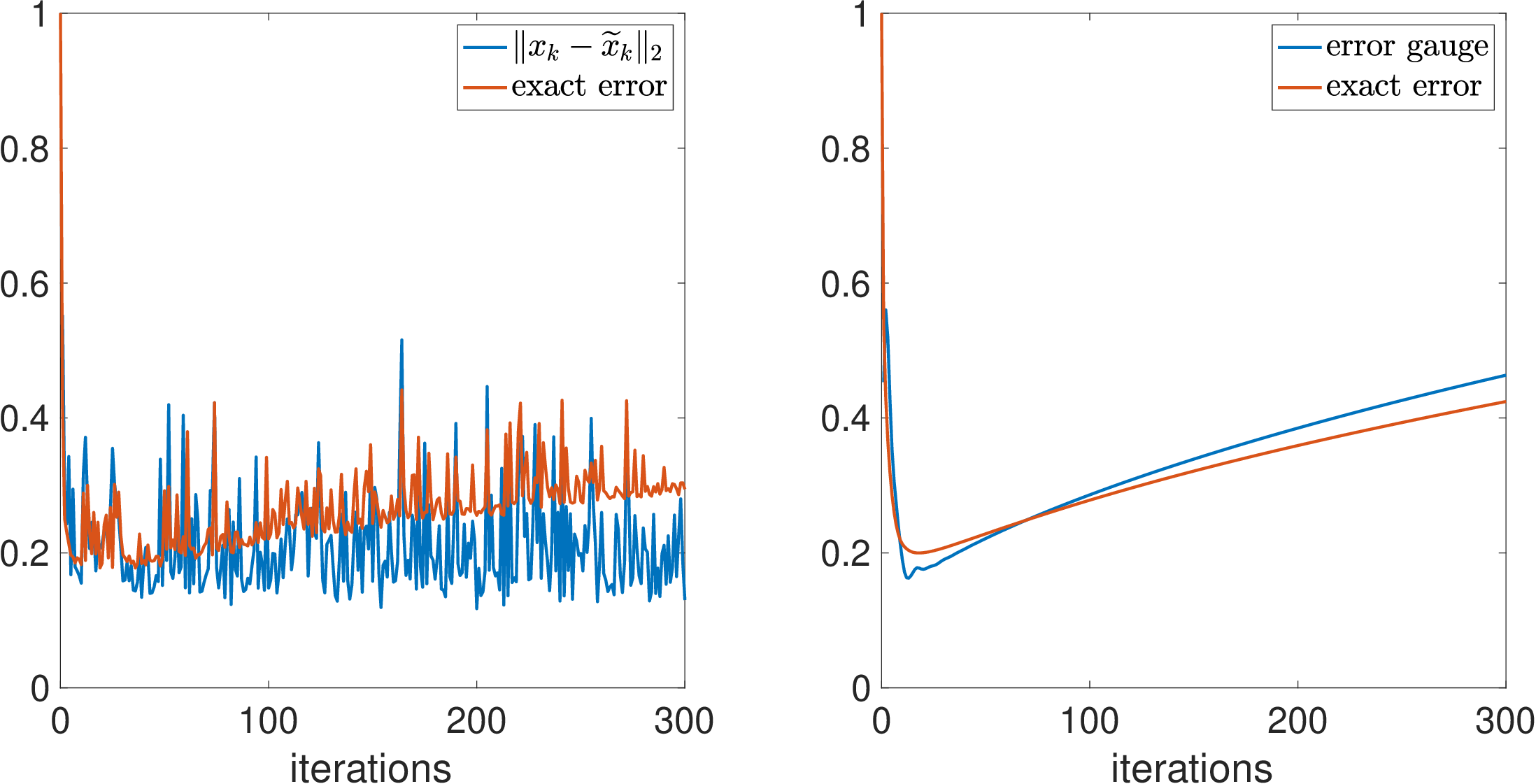}
		\caption{Results for the \code{shepplogan} phantom. Left: $\| \bx_k - \bxu_k \|_2$ and exact error for randomly shuffled Kaczmarz. Right: $\| \bx_k - \bxu_k \|_2$ -- the error gauge -- and exact error for cyclic Kaczmarz.}
		\label{fig:rand_eg_fail}
	\end{figure}
	
	It is interesting to note that many of the peaks occurring in the exact error are reflected in $\| \bx_k - \bxu_k \|_2$. However, most notably should be the fact that $\| \bx_k - \bxu_k \|_2$ does not show an increasing trend after the semi-convergence point. By contrast, in the cyclic case $\| \bx_k - \bxu_k \|_2$ follows the behavior of the exact error rather well. We must conclude that $\| \bx_k - \bxu_k \|_2$ is not an error gauge for randomized methods, and depends on a fixed ordering. Extending the error gauge to work with randomized methods is not a simple modification.

	Any randomized Kaczmarz method is going to give a distribution of outcomes for a fixed right-hand side. If we consider the infinite sequence of iterates as the outcome, any instance of a randomized Kaczmarz method will have a vanishing probability mass. At the same time, we should only compare events that have a nonzero probability. We can, for instance, compare randomized methods with cyclic methods by their average behavior. Alternatively, we can estimate the probability that an outcome of a randomized method produces a better result than a cyclic method. It turns out that for the CT problem, at least for our test problem, simply choosing a good relaxation parameter $\omega$ makes the cyclic method better than the randomized method in these senses.
	
	\begin{figure}
		\centering
		\includegraphics[width=.9\textwidth]{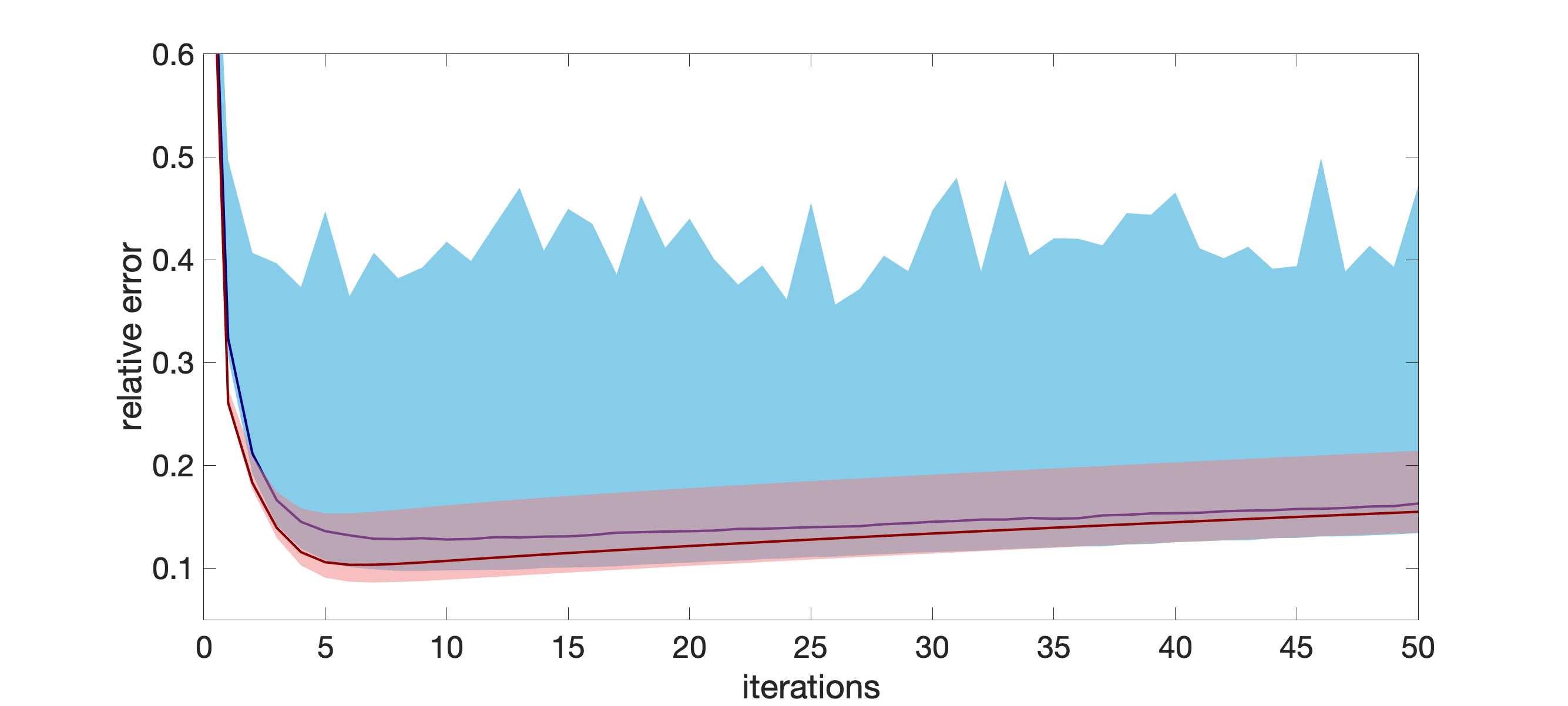}
		\caption{Average performance of cyclic (red line) and randomized methods (blue line) for $\omega = 0.4$. The shaded regions indicate the range of outcomes. Tests performed using the \code{shepplogan} phantom}
		\label{fig:rand_vs_cyclic}
	\end{figure}
	
	For randomized Kaczmarz, we have observed the general trend that a smaller relaxation parameter leads to a smaller minimal error.  Hence, the relaxation parameter $\omega = 0.4$, which is on the lower end of range we suggested in Section~\ref{sec:relax}, is also a good choice for randomized Kaczmarz. 
	In Figure~\ref{fig:rand_vs_cyclic}, we have performed 3000 runs of our test problem.
	For each run, Gau{\ss}ian noise was generated with a relative noise level of $8 \cdot 10^{-3}$. The results are therefore both averaged over the noise instances and the random orderings.
	
	As should be clear from the figure, for this example the cyclic Kaczmarz method converges faster and provides a smaller error on average. In 1891 out of 3000 cases the cyclic method produces a lower minimal error, which corresponds to roughly $63\%$ of the cases. This demonstrates that randomized methods do not always outperform cyclic Kaczmarz.
	
	It is, of course, possible that different randomized methods do perform better in some sense; how to choose the row-selection probabilities is a whole topic in and of itself. But then again, it is also possible to find better row orderings for the cyclic methods. Pursuing this quickly leads to a completely different topic that is outside the scope of the current work: we are concerned here with demonstrating the effectiveness of our error gauge.

	\subsection{Haunted house:\ a selection of phantoms}\label{sec:haunted_house}
	
	From now on, we use the relative noise level $\eta = 8 \cdot 10^{-3}$ in our experiments, as this may be seen as realistic; it is also the second-largest noise level from Figure~\ref{fig:effect_of_noise}. We also fix $\omega = 0.7$, which is in the upper end of the range we suggested in Section~\ref{sec:relax}.
	To illustrate the point of Section~\ref{sec:casing}, we plot the results of the various algorithms under consideration for
	two phantoms from \code{phantomgallery}, namely, the already-mentioned \code{grains} phantom and \code{shepplogan} which implements the Shepp--Logan phantom.
	The corresponding reconstructions are shown in Figure~\ref{fig:reconstructions}.
	Both phantoms have pixel values between 0 and 1.
	When we compute the relative error with respect to each phantom we use the solutions
	as produced by the algorithms, with some negative pixels and some pixels greater than one.
	The figures, on the other hand, show the reconstructions with the pixel values
	limited to the range $[0,1]$. For both phantoms the \MSA\ produces the best reconstruction.
	
	\begin{figure}
		\centering
		\includegraphics[width = .7\textwidth]{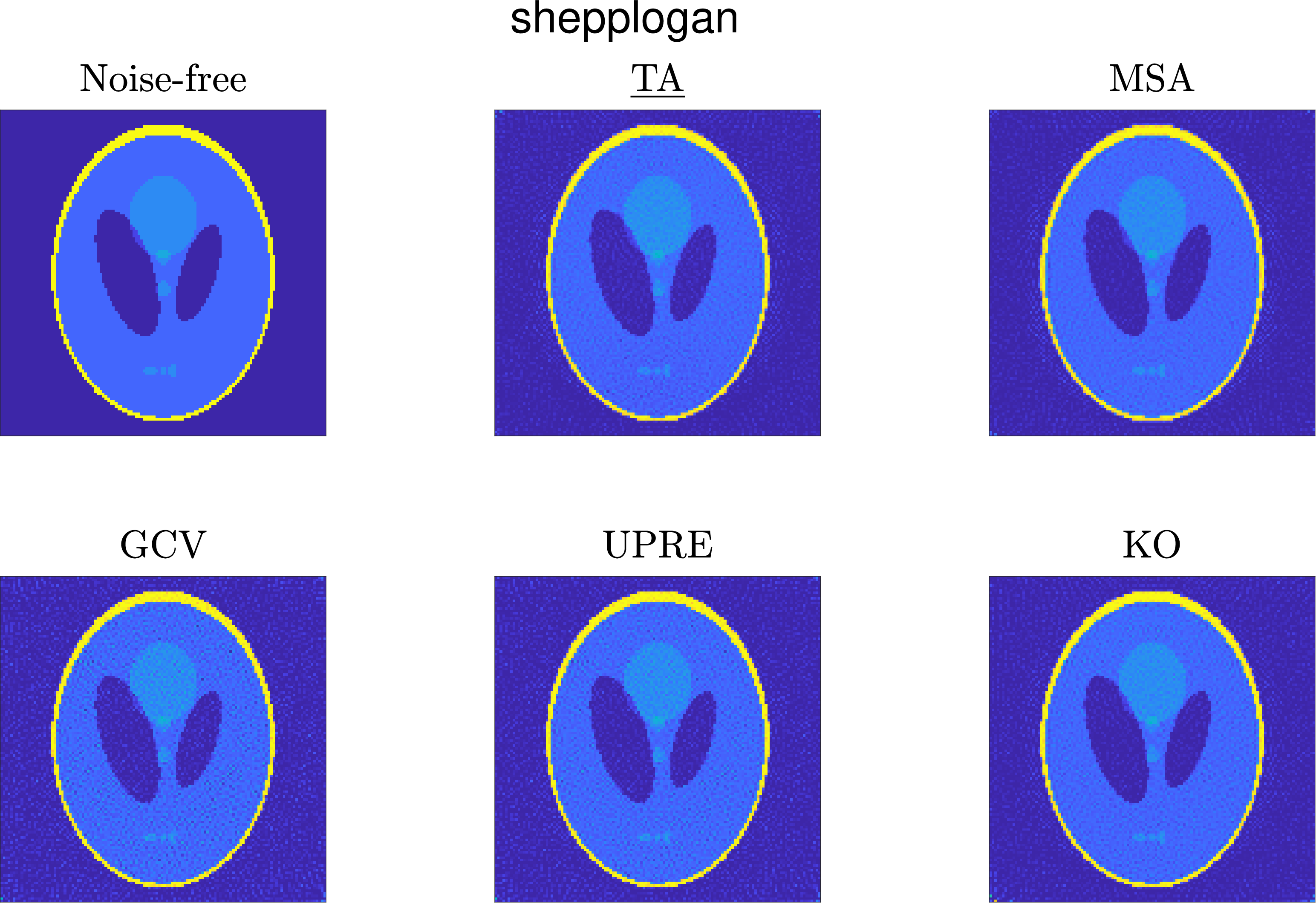} \\[8mm]
		\includegraphics[width = .7\textwidth]{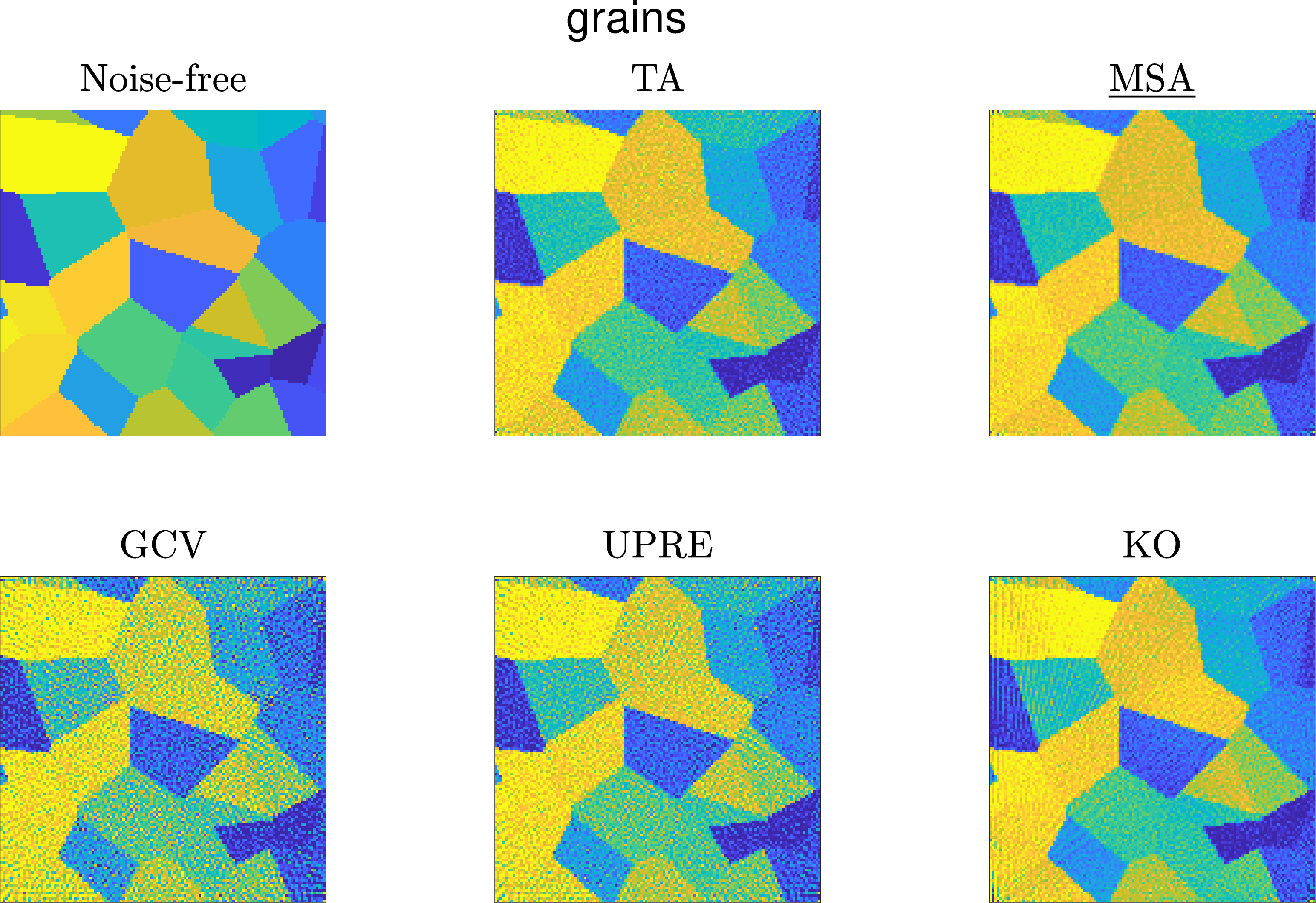}
		\caption{Numerical experiments with the \code{shepplogan} phantom (top) and the \code{grains} (bottom) phantom from \textsc{AIR Tools II}.
			The \TA\ produces the best result for \code{shepplogan}, while the \MSA\ produces minimum error for \code{grains}. When displaying the reconstructions the intensity is limited to $[0,1]$.}\label{fig:reconstructions}
	\end{figure}
	
	First consider the results for the \code{shepplogan} phantom. The \TA\ produces the best reconstruction: it is the least grainy picture,
	while the small details can be clearly distinguished. Both GCV and UPRE produce
	a very grainy image and the smallest details are harder to identify. For reference, we have also included a result using the oracle as a stopping rule. We refer to this algorithm as ``\KO '', which is indicated by KO in the figure.
	
	The results for the \code{grains} phantom are slightly more interesting to examine. This phantom consists of a collection of piecewise constant regions (which are Voronoi regions belonging to a random collection of points). The relative performance of the algorithms can be visually evaluated by looking especially at the contrast, e.g., between two neighboring regions that have close intensity values. Whether or not one can distinguish two such regions is a matter of opinion, but generally speaking it does appear that the \MSA\ produces the best results as well as the least grainy picture. GCV and UPRE perform particularly badly, it seems, where neighboring regions are sometimes very hard, if not impossible, to discern.
	
	To demonstrate that the observed behavior is not an oddity or statistical fluke, we run the reconstructions of the \code{grains} phantom 1000 times
	and keep track of the relative errors. From this point on, we compare our algorithms only to \KO,
	as the statistical stopping rules can never do better.

	The results are shown in the histograms in Figure~\ref{fig:error_histogram}.
	The histograms show several interesting features.
	First off, the \MSA\ is overall best, with the lowest average error and the smallest spread.
	In fact, the outcomes from this algorithm has very little or no overlap with the other algorithms.
	The \TA\ gives a slightly better average performance than \KO,
	though their histograms overlap almost completely. \KO\ has the largest spread, and is on average slightly worse than the \TA .
	At first, it may seem odd that the \TA\ produces a better result on average than the \KO.
	Yet, we should recall that the output of the \TA\ is the average of the up-sweeps and down-sweep iterates, while the \KO\ algorithm is associated with the down-sweep iterates only.
	Evidently the averaging often gives a better result than a down-sweep or up-sweep algorithm separately.

	\begin{figure}
		\centering
		\includegraphics[width=.9\textwidth]{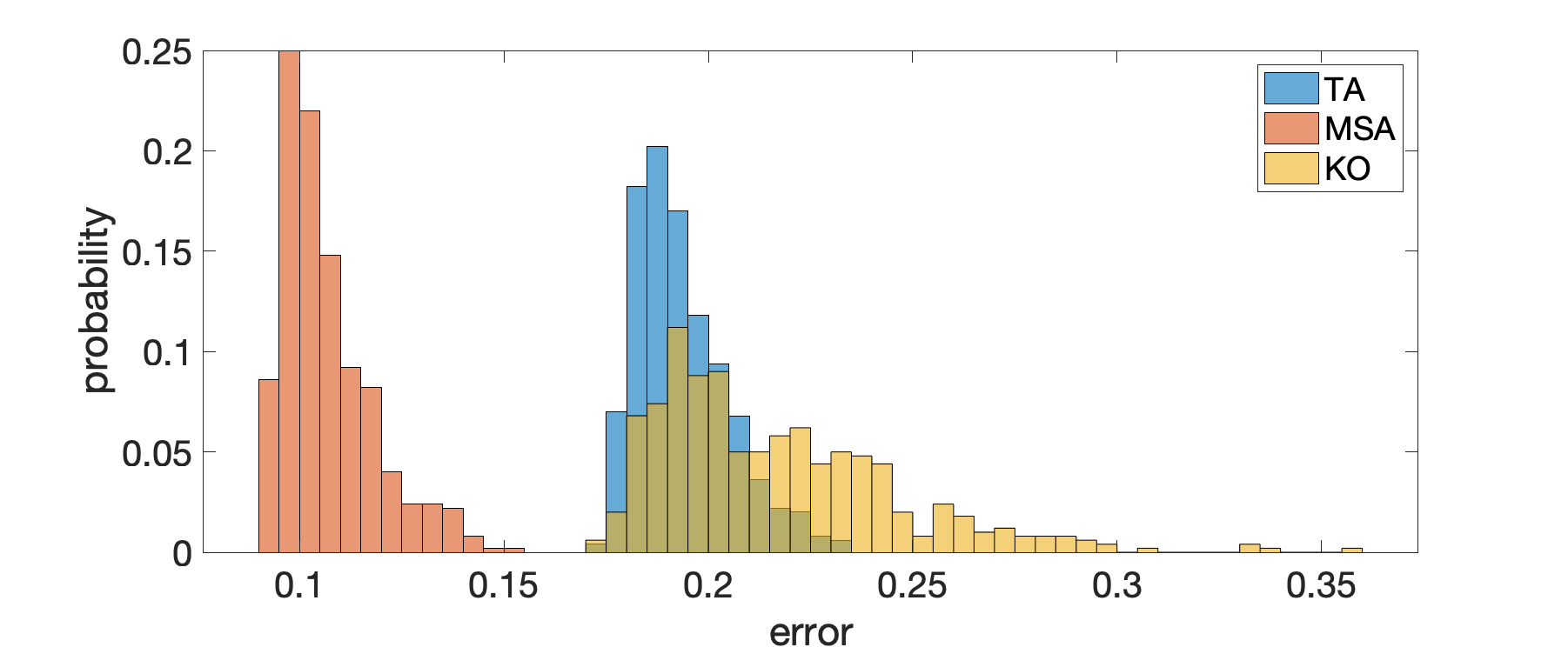}
		\caption{Histogram of errors for 1000 instances of the noise, for a fixed \code{grains} phantom. }
		\label{fig:error_histogram}
	\end{figure}
	
	To provide further support for the quality of our methods, we ran the proposed algorithms 100 times
	on each of seven phantoms available from \code{phantomgallery}.
	Additionally, for each run we assign points based on which method produces the best result.
	Our point system assigns a score of 1 point to the method with the best reconstruction, half a point for the second best, and no point for the worst.
	Hence, a score of 100 means the algorithm produced the best result each time, while a score of 0 means it produced the worst result each time. This allows us to see roughly how well the methods behave relative to each other.
	The final tally is contained in Table~\ref{tab:phantom_gallery_table}.

	The \MSA\ is probably the best algorithm
	under consideration when it comes to producing a high-quality image. For 4 out of 7 phantoms it produces the best image in the vast majority of tests. The \TA\ also produces good reconstructions, being the best for two phantoms. Combined, the \TA\ and the \MSA\ produce the best image in the majority of test cases for 6 out of 7 phantoms. Interestingly, only for the \code{binary} phantom does \KO\ have the highest score.
	
	To provide more insight, we also display the average errors for each particular phantom
	and the total average error in the first three columns of Table~\ref{tab:phantom_gallery_table}.
	Here we see another remarkable aspect
	of the proposed algorithms that was also observed in Figure~\ref{fig:error_histogram}: averaged over all phantoms, both proposed algorithms are at least as good as \KO; the \MSA\ is significantly better.

	\begin{table}
		\begin{small}
			\begin{center}
				\caption{Average relative errors, work units and score
					of 100 instances for each of seven phantoms. TA is the \TA, MSA is the \MSA\ and KO is the \KO\ algorithm.}\label{tab:phantom_gallery_table}
				\begin{tabular}{l|rrr|rrr|rrr} \hline
					& \multicolumn{3}{c|}{Relative errors} & \multicolumn{3}{c|}{Work units} & \multicolumn{3}{c}{Score} \\
					Phantom & TA & MSA & KO & TA & MSA & KO & TA & MSA & KO \\ \hline \rule{0pt}{2.3ex}
					\code{shepplogan} & 0.166 & 0.175 & 0.169 & 36.6 & 16.0 & 20.7 & 74.5 & 14.0 & 61.5 \\ 
					\code{smooth} & 0.194 & 0.105 & 0.163 & 28.9 & 17.1 & 17.4 & 12.5 & 99.0 & 38.5 \\ 
					\code{binary} & 0.215 & 0.222 & 0.209 & 26.5 & 15.6 & 23.7 & 54.5 & 12.0 & 83.5 \\ 
					\code{threephases} & 0.147 & 0.140 & 0.156 & 35.1 & 15.8 & 10.9 & 47.0 & 88.5 & 14.5 \\ 
					\code{threephasessmooth} & 0.132 & 0.110 & 0.142 & 34.0 & 17.2 & 10.4 & 32.0 & 100 & 18.0 \\ 
					\code{fourphases} & 0.190 & 0.202 & 0.193 & 38.0 & 16.2 & 20.3 & 81.5 & 7.5 & 61.0 \\ 
					\code{grains} & 0.134 & 0.092 & 0.149 & 40.1 & 16.4 & 15.3 & 41.5 & 100 & 8.5 \\[1mm]  \hline 
					Average & 0.168 & 0.149 & 0.169 & 34.2 & 16.3 & 17.0 & 49.1 & 60.1 & 40.8 \\ \hline 
				\end{tabular}
			\end{center}
		\end{small}
	\end{table}

	Finally, in the middle three columns of Table~\ref{tab:phantom_gallery_table}
	we also compare the methods when it comes to computing cost expressed in our work units
	(recall that one work unit is the work required to complete one sweep of Kaczmarz's method).
	We should point out that \KO\ has a very low work load, which is due to the fact
	that consulting the oracle is assumed to be free. The corresponding workload
	should therefore be read as a lower bound on the work required for Kaczmarz's method equipped with any pure stopping rule. Interestingly, the \MSA\
	is not too far off, with less work required when averaged over all 7 phantoms.
	The \TA\ usually requires much more work than the oracle, which is to be expected of course: if the \TA\ would stop at the same point as the oracle, it would have done roughly twice the work plus the slack.

	\section{Conclusion and future work}
	
	We presented a new approach to noisy CT reconstruction based on Kaczmarz's method.
	The regularizing property of the proposed methods stems from the semi-convergence of Kaczmarz's method, where the
	iteration number is used as the regularization parameter.
	The problem of choosing the regularization parameter takes the form of a stopping rule, typically
	based on a statistical analysis of the noise and the error.
	
	Our key idea is to combine two Kaczmarz iterations with different row orderings and the same convergence
	rate, which allows us to compute an \emph{error gauge}, i.e., an estimate of the reconstruction error.
	We then stop the iterations when the error gauge is minimum, providing an alternative to existing stopping rules and avoiding assumptions about the noise statistics.
	
	When the original linear system \eqref{eq:Axb} is consistent, we can prove rigorously that the error gauge estimates the reconstruction error.
	For noisy systems we argued that the error gauge represents semi-convergence with a reasonable fidelity.
	
	We suggested two algorithms that utilize the error gauge:\ the \TA\ and the \MSA.
	The former uses the error gauge directly as a stopping rule, stopping when the error gauge is minimal.
	The latter uses the error gauge to determine approximately optimal step sizes
	for every iteration.
	We showed that the \MSA\ converges monotonically to a locally optimal pair of approximations.
	It therefore converges to an approximation of the semi-convergence point, precluding the need for a stopping rule.
	
	Using several numerical experiments from parallel-beam X-ray CT, we demonstrated that the proposed algorithms perform very well indeed.
	As a reference, we used an oracle for the standard Kaczmarz algorithm that provides the exact error, which is therefore able to pick
	the best possible reconstruction from the sequence of iterates. Our proposed algorithms -- whose output is the average of the up- and down-sweeps -- perform on average better than Kaczmarz's method equipped with the oracle.
	
	For four out of the seven phantoms that we consider, the \MSA\ produce the best image for a large majority of cases.
	For two other phantoms, the \TA\ produces the best image in the majority of the test cases.
	Only for a single phantom our algorithms fail to produce the best reconstruction in the majority of the cases. Clearly, the \MSA\ is a solid choice for a reconstruction algorithm. However, it does require additional storage. Depending on what method is employed to detect a minimum in the error gauge, the \TA\ has the same memory requirements as the statistical stopping rules. Therefore, if memory is a concern, the \TA\ is a good alternative.
	
	\subsection*{Acknowledgements}
	We would like to thank the referees for their excellent comments and suggestions.
	B.S.~van Lith is supported by
	the EuroTech Postdoc Programme, co-funded by the European Commission under its framework programme Horizon 2020. Grant Agreement number 754462.
	
	\bibliographystyle{siam}
	%\bibliography{stopping_rule_references}

\end{document}